\newtheorem{lemma}{Lemma}[section]
\newtheorem{thm}[lemma]{Theorem}
\newtheorem{cor}[lemma]{Corollary}
\newtheorem{hyp}[lemma]{Hypothesis}
\journal{Journal of Pure and Applied Algebra}
\begin{document}

\setlength{\parindent}{0mm}

\newcommand{\m}{$\,\textrm{max}\,$}
\newcommand{\w}{\widehat}
\newcommand{\wi}{\widehat}
\newcommand{\ov}{\overline}
\newcommand{\N}{\mathbb{N}}
\def \P{\mathbb{P}}

\newcommand{\E}{\mathcal{E}}
\newcommand{\K}{\mathcal{K}}
\newcommand{\sym}{\textrm{Sym}}
\newcommand{\A}{\textrm{Alt}}

\newcommand{\Z}{\mathbb{Z}}

\newcommand{\wt}{\widetilde}
\newcommand{\wh}{\widehat}
\newcommand{\ti}{\tilde}

\newcommand{\M}{$\textrm{M}$}
\newcommand{\J}{$\textrm{J}$}
\newcommand{\ch}{$\textrm{char}$}
\newcommand{\sy}{$\,\textrm{Syl}$}
\newcommand{\au}{$\textrm{Aut}$}
\newcommand{\PSL}{$\textrm{PSL}$}
\newcommand{\PSU}{$\textrm{PSU}$}
\newcommand{\PGL}{$\textrm{PGL}$}
\newcommand{\PGaL}{P\Gamma L}
\newcommand{\GL}{$\textrm{GL}$}
\newcommand{\GU}{$\textrm{GU}$}
\newcommand{\Sp}{$\textrm{Sp}$}
\newcommand{\PSp}{$\textrm{PSp}$}
\newcommand{\Sz}{$\textrm{Sz}$}
\newcommand{\SL}{$\textrm{SL}$}
\newcommand{\SU}{$\textrm{SU}$}
\newcommand{\F}{$\textrm{GF}$}
\newcommand{\C}{$\textrm{C}$}
\newcommand{\FO}{\textrm{fix}_{\Omega}}
\newcommand{\FL}{\textrm{fix}_{\Lambda}}
\newcommand{\FD}{\textrm{fix}_{\Delta}}
\newcommand{\fixO}{\textrm{fix}_{\Omega}}
\newcommand{\fixL}{\textrm{fix}_{\Lambda}}
\newcommand{\out}{$\textrm{Out}$}
\newcommand{\Sym}{\textrm{Sym}}
\newcommand{\Alt}{\textrm{Alt}}
\newcommand{\rank}{$\textrm{r}$}
\newcommand{\He}{$\textrm{He}$}
\newcommand{\aut}{\textrm{Aut}}

\def \<{\langle }
\def \>{\rangle }
\def \L{\mathcal{L}}

\begin{frontmatter}

\title{Corrigendum and addendum to ``Transitive permutation groups where nontrivial elements have at most two fixed points''}

\author{Paula Hähndel and Rebecca Waldecker}

\begin{abstract}
This article revisits earlier work by the second author together with Kay Magaard. We correct
several little results and we briefly discuss why, fortunately, the errors hardly affect our
main theorems and in particular do not affect the classification of simple groups that act with fixity 2. 
As an addition to the submitted article, this
version also contains \texttt{GAP} code in a little appendix at the end.
\end{abstract}

\begin{keyword}
Permutation group, fixed points, fixity,
simple group
\end{keyword}
\end{frontmatter}


\section{Introduction}

\vspace{0.2cm}
The motivation to investigate permutation groups that act with low fixity
stems from applications to Riemann surfaces.
Following Ronse (see \cite{Ro1980}), we say that \textbf{a group $G$ has fixity $k \in \N$ on a set $\Omega$} if and only if
$k$ is the maximum number of fixed points of elements of $G^\#$ on $\Omega$.
More background on our motivation and on applications can be found in \cite{MW2}.
This article revisits the analysis of the Sylow 2-structure in connection with point stabilisers, which corrects a previous error and leads to improved general structure results. We also explain why, fortunately, the flaw did not lead to mistakes in the classification of finite simple groups that act with fixity 2.

In addition to two corrections, we prove a modified, more compact version of Theorem 1.4 from~\cite{MW2}. The main difference is that in our hypothesis here, we ask for fixity exactly 2 instead of fixity at most 2, which implies that $G$ has even order. In (1) we allow more general 2-power indices than in the corresponding case in Theorem 1.4 of \cite{MW2}, and the statement in (5) collects the former cases (5) and (6). In the proof of Theorem \ref{main2} we will comment on this with more details.

\begin{thm} \label{main2}
Suppose that $G$ is a finite, transitive permutation
group with permutation domain $\Omega$. Suppose further that
 $G$ acts with fixity~$2$ on $\Omega$. Then
$G$ has even order and 
one of the following holds:

\begin{enumerate}

\item[(1)]
$G$ has a subgroup of 2-power index that is a Frobenius group.

\item[(2)]
$|Z(G)|=2$ and $G/Z(G)$ is a Frobenius group.

\item[(3)]
The point stabilisers are metacyclic of odd order. If $H$ is a
nontrivial two point stabiliser, then $|N_G(H):H|=2$, $G$ is
solvable and $H$ or $N_G(H)$ has a normal complement $K$ in $G$ such
that $K$ is nilpotent and $(|K|,|H|)=1$.

\item[(4)]
The point stabilisers are metacyclic of odd order. Moreover $G$ has
normal subgroups $N,M$ such that $N < M < G$, $N$ is nilpotent,
$M/N$ is simple and isomorphic to $\PSL_2(q)$, to $\Sz(q)$ or to
$\PSL_3(4)$, and $G/M$ is metacyclic of odd order.


\item[(5)]
The point stabilisers have even order and $G$ has a normal subgroup
$N$ of odd order such that $O^{2'}(G)/N$ is either a dihedral or
semi-dihedral $2$-group or there exists a prime power $q$ such that
it is isomorphic to $\Sz(q)$ or to a subgroup of $\PGaL_2(q)$ that
contains $\PSL_2(q)$.

\end{enumerate}
\end{thm}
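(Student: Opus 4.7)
The plan is to derive Theorem~\ref{main2} from (the corrected form of) Theorem~1.4 in~\cite{MW2}. That theorem assumes only fixity at most~$2$ and lists a number of structural possibilities; under our stronger hypothesis of fixity exactly~$2$, the cases corresponding to the free (fixity~$0$) and Frobenius (fixity~$1$) actions are excluded, and the remaining alternatives are regrouped into the five listed above.

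A preliminary observation is that each of the surviving cases already forces $|G|$ to be even, so the evenness claim is automatic once one of (1)--(5) has been established. In~(1) the Frobenius subgroup has a $2$-power index strictly greater than one; in~(2) the centre has order~$2$; in~(3) $|N_G(H):H|=2$; in~(4) the simple section $\mathrm{PSL}_2(q)$, $\mathrm{Sz}(q)$ or $\mathrm{PSL}_3(4)$ has even order; and in~(5) the structural quotient of $O^{2'}(G)$ is either a dihedral or semi-dihedral $2$-group, a Suzuki group, or a subgroup of $\PGaL_2(q)$ containing $\mathrm{PSL}_2(q)$, all of even order.

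Next I would match the old and new cases. Cases~(2), (3), and~(4) transfer unchanged from~\cite{MW2}. Case~(1) is the genuine upgrade: the original version in~\cite{MW2} permitted only a restricted form of $2$-power index for the Frobenius subgroup, whereas the corrected structural lemma established earlier in this corrigendum allows arbitrary $2$-power indices, and this is what~(1) now records. The old cases~(5) and~(6) of~\cite{MW2} are amalgamated into our~(5) by observing that both produce a normal odd-order subgroup $N$ for which $O^{2'}(G)/N$ admits a description of the form listed above; their merger is essentially notational.

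The main obstacle is the bookkeeping in the last step. One must check that the strengthening of~(1) rests solely on the corrected lemmas and does not overlap with~(2)--(5), and that the list of simple composition factors in~(4) and~(5) is left unchanged by the corrections; in particular, $\mathrm{PSL}_3(4)$ must remain confined to~(4) (where point stabilisers have odd order), and no additional simple sections should appear from the relaxed Sylow-$2$ analysis. This verification proceeds subcase by subcase, following the proof in~\cite{MW2} but replacing each invocation of the flawed lemma with its corrected counterpart proved earlier in this corrigendum.
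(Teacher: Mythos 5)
There is a genuine gap: your proposal is circular at its core. You propose to ``derive Theorem~\ref{main2} from (the corrected form of) Theorem~1.4 in~\cite{MW2}'', but no such corrected theorem exists as a quotable result --- Theorem~\ref{main2} \emph{is} the corrected form, and the original Theorem~1.4 of~\cite{MW2} rests on the flawed Lemma~2.12 there, so it cannot be invoked as a black box. The actual proof has to re-run the structural analysis from scratch. The paper does this with a minimal counterexample (minimising $|G|+|\Omega|$) and a case split driven by the corrected Lemma~\ref{anika2}: if $G_\alpha$ is metacyclic of odd order one uses Theorem~5.6 of~\cite{MW2} to land in case~(3) or~(4); if $G_\alpha$ has odd order but is not of that shape one uses Lemma~\ref{2.16}, then $F(G)=1$, then Lemmas~\ref{onecomp} and~\ref{E(G)} to reduce to a quasi-simple $F^\ast(G)$ and invoke Theorems~5.1 and~1.3 of~\cite{MW2}; if $G_\alpha$ has even order one runs through cases~(2), (3), (4) of Lemma~\ref{anika2} to reach~(5), (1), and (via Lemma~2.18 of~\cite{MW2}) (5) respectively. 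In particular, the ``general $2$-power index'' in~(1) comes out of case~(3) of Lemma~\ref{anika2} (normal Frobenius subgroup of index $2$ or $|S|$), not from a cosmetic relaxation of the old statement. Your closing paragraph concedes that the verification must proceed ``subcase by subcase, replacing each invocation of the flawed lemma'', which is indeed the right plan --- but the proposal contains none of that content, and that content is the entire proof.

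A secondary but real error: the claim that the evenness of $|G|$ is ``automatic once one of (1)--(5) has been established'' is logically backwards. Evenness is not read off from the conclusion; it is proved independently as Lemma~\ref{normaliser}(iii) (if $|G|$ were odd, $G$ would be a Frobenius group by Lemma~\ref{charfrob}, and a fixity-$2$ element would force a $G$-invariant partition of $\Omega$ into pairs, contradicting transitivity of a group of odd order). Since your route to (1)--(5) is the circular quotation above, you have no independent source for the evenness claim either.
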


Throughout this article, we use \texttt{GAP}. We do not always refer to it in the bibliography, but we do it once, here, along with the extended version of this article that contains the code (\cite{GAP}, \cite{HW}).
We also want to note that some results here are proven in more generality than necessary for this corrigendum, because they are also relevant for ongoing work and we do not want to prove very similar results several times.

\begin{center}

\textbf{Acknowledgments}

\end{center}

We thank Stefan Hoffmann, Christoph Möller and Patrick Salfeld for pointing out mistakes, for suggesting possible improvements in \cite{MW2} and for fruitful discussions on this revision. 
We are grateful to the referee for suggesting additional details to improve this article.
We also thank Anika Streck for revising some results about the 2-structure, which contributed to several lemmas, and Chris Parker for making us aware of earlier work on low fixity actions. This article has been written remembering Kay Magaard, with much gratitude and appreciation.

\vspace{1cm}


\section{Preliminaries}

\vspace{0.2cm}

All groups in this article are meant to be finite, and we use standard notation for orbits and point stabilisers.
Let $\Omega$ be a finite set and suppose that a group $G$ acts on $\Omega$.
Then for all $\Delta
\subseteq \Omega$, all $g \in G$ and all $H \le G$ we let
$\FD(H):=\{\delta \in \Delta \mid \delta^h=\delta$ for all $h \in
H\}$ denote \textbf{the fixed point set of $H$ in $\Delta$} and abbreviate
$\FD(\<g\>)$ by $\FD(g)$.
For all $n \in \N$, we denote the cyclic group of order $n$ by
$\C_n$.

After the publication of \cite{MW2}, we became aware of Ronse's work, see \cite{Ronse}. In particular, as far as we know, the notion of fixity (as given in the introduction) goes back to him and generalises ideas by Bender, Buekenhout and Rowlinson, Hering, Hiramine and others, where the focus was on the maximum number of fixed points of involutions.

Throughout this work, we come across Frobenius groups, and therefore the following result is useful. A proof can be found in \cite{MW2}.

\begin{lemma}\label{charfrob}
Suppose that $G$ has a non-trivial proper subgroup $H$ such that the
following holds: ~Whenever $1 \neq X \le H$, then $N_G(X) \le H$.

Then $G$ is a Frobenius group with Frobenius complement $H$.
\end{lemma}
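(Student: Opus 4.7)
My plan is to prove the conclusion by contradiction, using a minimal counterexample. First I would record the easy consequences of the hypothesis: applying it to $X = H$ gives $N_G(H) = H$, and applying it to cyclic subgroups gives $C_G(h) \leq H$ for every $h \in H \setminus \{1\}$. Also, the hypothesis is invariant under conjugation and is inherited by any intermediate subgroup $L$ with $H \leq L \leq G$. Suppose for contradiction that the lemma fails, and choose a counterexample $(G, H)$ with $|G|$ minimal and, subject to that, $|H|$ minimal. Then there exists $g \in G \setminus H$ with $D := H \cap H^g \neq 1$. Setting $L := \langle H, g \rangle$, if $L < G$ then by the inheritance of the hypothesis and the minimality of $|G|$, $L$ would be Frobenius with complement $H$; but then $g \in L \setminus H$ forces $H \cap H^g = 1$, contradicting $D \neq 1$. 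Hence $G = \langle H, g \rangle$.

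Next I would analyze $D$. Since the hypothesis also holds for $H^g$ by conjugation, for every $1 \neq Y \leq D$ we have $Y \leq H$ and $Y \leq H^g$, whence $N_G(Y) \leq H \cap H^g = D$. Thus $(G, D)$ satisfies the same hypothesis as $(G, H)$. Moreover $D < H$: if $D = H$ then $H \leq H^g$ gives $H = H^g$, hence $g \in N_G(H) = H$, a contradiction. By the minimality of $|H|$, the pair $(G, D)$ is not a counterexample, so $G$ is a Frobenius group with complement $D$. Write $G = K \rtimes D$ with Frobenius kernel $K$.

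The main difficulty is now to derive a contradiction from this Frobenius structure combined with the existence of an intermediate $H$ with $D < H < G$ satisfying the hypothesis. Using $D \leq H$ together with $H \cap K \cap D = 1$ and $H/(H \cap K) \cong HK/K = DK/K \cong D$, we may write $H = (H \cap K) \cdot D$. The cases $H \cap K = 1$ and $H \cap K = K$ quickly give $H = D$ or $H = G$, both contradicting $D < H < G$. The remaining case $1 < H \cap K < K$ I would handle by invoking the nilpotency of the Frobenius kernel (Thompson's theorem): in a nilpotent group the normalizer of a proper subgroup grows strictly, so $N_K(H \cap K) > H \cap K$. But the hypothesis applied to the nontrivial subgroup $H \cap K$ of $H$ forces $N_G(H \cap K) \leq H$, and intersecting with $K$ yields $N_K(H \cap K) \leq H \cap K$, the final contradiction.
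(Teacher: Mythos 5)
Your proof is correct, but it takes a genuinely different (and much heavier) route than the one the paper relies on. The paper does not reprove the lemma; it cites \cite{MW2}, where the argument is the classical elementary one: assume $D:=H\cap H^g\neq 1$ for some $g\in G\setminus H$, take a Sylow $p$-subgroup $P$ of $D$, and use the hypothesis (applied both to subgroups of $H$ and, by conjugation, of $H^g$) together with the fact that normalisers grow in $p$-groups to force $P$ to be a Sylow subgroup of $G$ contained in both $H$ and $H^g$; Sylow conjugacy inside $H$ then puts $g$ into $N_G(P)H\subseteq H$, a contradiction. That argument needs nothing beyond Sylow's theorems. Your argument instead runs a double minimality induction to reduce to the pair $(G,D)$, and then invokes Frobenius's theorem (existence of the kernel $K$ as a normal complement) and Thompson's theorem (nilpotency of $K$) to kill the case $1<H\cap K<K$ via the normaliser condition. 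Every step checks out: the hypothesis does pass to $(G,D)$ and to intermediate subgroups, $HK=DK=G$ gives $H=(H\cap K)D$, the three cases for $H\cap K$ are handled correctly, and there is no circularity, since the two big theorems are applied only to the already-established Frobenius pair $(G,D)$. What you lose is self-containedness: you are using character theory and one of the deepest results in the area to prove a statement with a short Sylow-theoretic proof. Two minor remarks: the step establishing $G=\langle H,g\rangle$ is never used afterwards and can be deleted, and in the final case you could in principle replace Thompson's theorem by the observation that $H\cap K\trianglelefteq H$ forces $N_G(H\cap K)=H$, followed by a Sylow argument inside $K$ --- which essentially brings you back to the classical proof.
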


We now collect a few basic technical results from previous papers, and we prove them in a 
very general version here for future reference. 

\begin{lemma}\label{normaliser}

Suppose that $G$ is a finite, transitive permutation
group with permutation domain $\Omega$. Suppose further that
 $G$ acts with fixity~$k \in \N$ on $\Omega$ and let $\alpha \in \Omega$.

(i) If $1 \neq X \le G_{\alpha}$, then $|N_G(X):N_{G_\alpha}(X)| \le k$.
In particular, if
$\FO(X)=\{\alpha\}$, then
$N_G(X) \le G_\alpha$.

(ii) If $x \in G_{\alpha}^\#$, then $|C_G(x):C_{G_\alpha}(x)| \le k$. In particular, if
$\FO(x)=\{\alpha\}$, then
$C_G(x) \le G_\alpha$.

(iii) If $k=2$, then $G$ has even order.

(iv) $|Z(G)|$ divides $k$.

(v) If $p \in \pi(G)$ and $p>k$, then either $G_\alpha$ is a $p'$-group or it contains a Sylow $p$-subgroup of $G$.

\end{lemma}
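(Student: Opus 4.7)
The plan is to drive all five parts from a single common device: whenever a subgroup $Y \le G$ preserves the fixed point set $\FO(X)$ of some nontrivial $X \le G_\alpha$, the $Y$-orbit of $\alpha$ inside $\FO(X)$ has size at most $k$, and its stabiliser in $Y$ is $Y \cap G_\alpha$; orbit--stabiliser then yields $|Y:Y \cap G_\alpha| \le k$. For (i), I would first check that $N_G(X)$ really preserves $\FO(X)$: if $g \in N_G(X)$ and $\beta \in \FO(X)$, then for every $x \in X$ we have $gxg^{-1} \in X$, so $(\beta^g)^x = \beta^{gx} = \beta^{(gxg^{-1})g} = \beta^g$, i.e.\ $\beta^g \in \FO(X)$. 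Applying the device with $Y = N_G(X)$ and $Y \cap G_\alpha = N_{G_\alpha}(X)$ gives the bound. The ``in particular'' clause is immediate: if $\FO(X) = \{\alpha\}$, the $N_G(X)$-orbit of $\alpha$ collapses to a single point, forcing $N_G(X) \le G_\alpha$. Part (ii) is the same argument applied to $C_G(x)$ acting on $\FO(x)$, with centralising replacing normalising.

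For (iv), I would show that $Z(G)$ acts semi-regularly on $\Omega$ and then that $\FO(y)$ is a union of $Z(G)$-orbits whenever $y \in G^\#$. For the semi-regularity, note that if $z \in Z(G)$ then $\FO(z)$ is $G$-invariant (since $z$ commutes with every element of $G$), so by transitivity it equals $\emptyset$ or $\Omega$; faithfulness of the permutation action identifies the second possibility with $z = 1$. Hence $Z(G) \cap G_\beta = 1$ for every $\beta \in \Omega$. Now pick $y \in G^\#$ with $|\FO(y)| = k$, which exists because the fixity is exactly $k$; then $Z(G) \le C_G(y)$ acts on $\FO(y)$ by the argument of (ii), and by semi-regularity each of its orbits has length $|Z(G)|$, so $|Z(G)|$ divides $|\FO(y)| = k$.

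For (v), the plan is to combine (i) with the normaliser condition for $p$-groups. Assume $p > k$ divides $|G_\alpha|$ and let $P$ be a Sylow $p$-subgroup of $G_\alpha$ that is not Sylow in $G$. Embed $P$ in some Sylow $p$-subgroup $Q$ of $G$ with $P < Q$ and pick $y \in N_Q(P) \setminus P$ of order $p$ modulo $P$. Then $\langle P, y \rangle \le N_G(P)$ acts on $\FO(P) \ni \alpha$, and its $\alpha$-stabiliser $\langle P, y \rangle \cap G_\alpha$ contains $P$ as a subgroup of index $1$ or $p$. By (i) this index is at most $k < p$, forcing it to be $1$; then $y \in G_\alpha$ normalises $P$ strictly, contradicting the Sylow maximality of $P$ in $G_\alpha$.

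Part (iii) is, I expect, the main obstacle. I would argue by contradiction, assuming $|G|$ is odd. Since the fixity is exactly $2$, some nontrivial two-point stabiliser $H = G_{\alpha, \beta}$ exists, and $\FO(H) = \{\alpha, \beta\}$ because every $h \in H^\#$ has both $\alpha$ and $\beta$ among its at most two fixed points. By (i), $N_G(H)$ permutes $\{\alpha, \beta\}$. If some $g \in N_G(H)$ swaps them, then $g \notin H$ while $g^2 \in H$, and since $\langle g^2 \rangle \le H$, an odd order of $g$ would entail $g \in \langle g^2 \rangle \le H$, a contradiction; so $|g|$ is even and $|G|$ is even. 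The delicate case to rule out under the odd order assumption is when no such swapper exists for any choice of nontrivial two-point stabiliser, i.e.\ $N_G(H) = H$ for every such $H$. Here I would try to exploit a carefully chosen Sylow subgroup of $H$ together with (i) and (v) to force a contradiction with the self-normalising configuration, or, failing that, appeal to a structural result on groups of odd order. This is the technical heart of the lemma.
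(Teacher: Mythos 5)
Parts (i), (ii), (iv) and (v) are correct and essentially identical to the paper's arguments: the orbit--stabiliser bound $|N_G(X):N_{G_\alpha}(X)|=|\alpha^{N_G(X)}|\le|\FO(X)|\le k$ is exactly the paper's device, your semi-regularity argument for $Z(G)$ is what the paper's terse "$Z(G)\cap G_\alpha=1$" is encoding, and your Sylow argument in (v) is a correct unpacking of the paper's appeal to the normaliser-growth property of $p$-groups.

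The genuine gap is in (iii), and you have flagged it yourself: you dispose of the case where some $g\in N_G(H)$ swaps $\alpha$ and $\beta$, but you leave open the case $N_G(H)=H$, offering only the hope of "a carefully chosen Sylow subgroup" or "a structural result on groups of odd order". Neither is needed, and as written the proof of (iii) is incomplete. The missing idea is to use part (i) globally first: if $|G|$ is odd, then for every $1\neq X\le G_\alpha$ the index $|N_G(X):N_{G_\alpha}(X)|$ is both at most $2$ and a divisor of the odd number $|G|$, hence equals $1$, so $N_G(X)\le G_\alpha$. Lemma~\ref{charfrob} then makes $G$ a Frobenius group with complement $G_\alpha$. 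Now take $x$ with $|\FO(x)|=\{\alpha,\beta\}$; since distinct Frobenius complements intersect trivially and $1\neq x\in G_\alpha\cap G_\beta$, we get $G_\alpha=G_\beta$, and by transitivity and the fixity-$2$ bound every point stabiliser fixes exactly two points, partitioning $\Omega$ into pairs. This contradicts the fact that $|\Omega|=|G:G_\alpha|$ is odd. Your "swapper" dichotomy is not wrong, but without the Frobenius step you have no leverage on the self-normalising case, so you should replace that part of the argument rather than try to patch it.
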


\begin{proof}
Special cases of many of these statements have been proven in \cite{MW2}.
(i) and (ii) follow from the fact that $|\FO(X)|\le k$, because this implies that
$k \ge |\alpha^{N_G(X)}|=|N_G(X):N_{G_\alpha}(X)|$.

Now for (iii) we assume that $G$ has odd order and then we see that $G$ is a Frobenius group, with
(i) and Lemma \ref{charfrob}. Let $K$ denote the Frobenius kernel and let $H$ be a Frobenius complement.
By hypothesis we find $x \in G$ such that $|\FO(x)|=2$ and without loss $\alpha \in \FO(x)$.
If we let $X\le G_\alpha$ be the stabiliser of $\FO(x)$, then we see that $N_G(X)=G_\alpha=G_\beta$, because $G$ has odd order and then there is no element in $G$ that interchanges $\alpha$ and $\beta$.
This gives a partition of $\Omega$ into pairs, which is again impossible, because $G$ has odd order and acts transitively.

For (iv) we take an element $x \in G_\alpha$ that fixes exactly $k$ points, and we use (ii). Since $Z(G) \cap G_\alpha=1$, this implies 
that $|Z(G)|$ divides $k$.
Statement (v) follows from (i) and the fact that proper subgroups of $p$-groups are properly contained in their normaliser.
A special case of this has been proven as Lemma 2.11 (a) in \cite{MW2}.
\end{proof}

Part (v) of this lemma shows that if $k=2$, $p \in \pi(G)$ is odd and $P\in \sy_p(G)$, then all $P$-orbits
have length $1$ or $|P|$.
For the prime 2 the situation is more interesting, and this is where a mistake happened in Lemma 2.12 of \cite{MW2}. we will comment on this in the next section.
First,a preparatory, probably well-known lemma:

\begin{lemma}\label{anika1}
Suppose that $p$ is a prime number and that $P$ is a finite $p$-group of size $p^n$, where $n \in \N$ and $n \ge 3$. Suppose further that $x \in P$ is such that $|C_P(x)| \le p^2$.
Then $P$ has maximal class, i.e. nilpotency class $n-1$.
\end{lemma}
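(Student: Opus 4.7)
The plan is to induct on $n$. For the base case $n=3$: if $P$ were abelian then $C_P(x)=P$ would have order $p^3$, contradicting the hypothesis, so $P$ is non-abelian and therefore has nilpotency class $2=n-1$.

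For the inductive step ($n\ge 4$) the first observation is that $|Z(P)|=p$. Indeed, $|C_P(x)|\le p^2<|P|$ forces $x\notin Z(P)$, so the inclusion $Z(P)\le C_P(x)$ is strict and $|Z(P)|<p^2$; combined with $Z(P)\ne 1$ this yields $|Z(P)|=p$. I then pass to $\overline{P}:=P/Z(P)$, which has order $p^{n-1}\ge p^3$, and the core of the argument is to verify that the image $\bar x\ne 1$ still satisfies $|C_{\overline{P}}(\bar x)|\le p^2$, so that the inductive hypothesis can be applied. To this end, let $H$ denote the preimage of $C_{\overline{P}}(\bar x)$ in $P$; equivalently $H=\{g\in P\mid [x,g]\in Z(P)\}$, which contains $C_P(x)$. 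The commutator map $H\to Z(P)$, $h\mapsto [x,h]$, then has fibres equal to the right cosets of $C_P(x)$ in $H$, because $[x,g]=[x,h]$ is equivalent to $hg^{-1}\in C_P(x)$; since the image lies in $Z(P)$, this gives $|H|/|C_P(x)|\le p$, hence $|H|\le p^3$ and $|C_{\overline{P}}(\bar x)|=|H|/p\le p^2$. Checking that the centralizer bound descends to the quotient is the main technical obstacle.

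By the inductive hypothesis $\overline{P}$ has maximal class $n-2$. The conclusion for $P$ is then immediate from the standard identity $Z_i(\overline{P})=Z_{i+1}(P)/Z(P)$ ($i\ge 0$) relating the upper central series of $P$ and $P/Z(P)$: since $\overline{P}$ has class $n-2$ we obtain $Z_{n-1}(P)=P$ while $Z_{n-2}(P)\lneq P$, so the nilpotency class of $P$ is exactly $(n-2)+1=n-1$, i.e.\ $P$ has maximal class.
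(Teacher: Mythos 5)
Your proof is correct, and it takes a genuinely different route from the paper's. The paper's argument is very short: it observes $Z(P)\langle x\rangle\le C_P(x)$, rules out $x\in Z(P)$, concludes that $|C_P(x)|=p^2$ exactly and hence $|x^P|=p^{n-2}$, and then cites Satz III.14.23 of Huppert, which says that a $p$-group of order $p^n$ containing a conjugacy class of length $p^{n-2}$ has maximal class. You instead give a self-contained induction on $n$, in effect reproving the relevant special case of that Satz. The one genuinely delicate point --- that the centralizer bound descends to $\overline{P}=P/Z(P)$ --- you handle correctly: the preimage $H$ of $C_{\overline{P}}(\bar x)$ maps into $Z(P)$ via $h\mapsto[x,h]$, the fibres of this map are exactly the cosets of $C_P(x)$ in $H$, and since $|Z(P)|=p$ (which you also justify correctly) this gives $|H|\le p\cdot|C_P(x)|\le p^3$ and so $|C_{\overline{P}}(\bar x)|\le p^2$. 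Combined with the base case $n=3$ and the standard relation $Z_i(\overline{P})=Z_{i+1}(P)/Z(P)$, the induction closes cleanly. What the paper's approach buys is brevity at the cost of an external citation; what yours buys is a completely elementary, self-contained argument. Both are valid.
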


\begin{proof}
Of course $Z(P)\langle x \rangle \le C_P(x)$, and we have two cases:
$x \in Z(P)$, which gives that $P \le C_P(x)$ and leads to a contradiction, or 
$x \notin Z(P)$, whence $|Z(P)\langle x \rangle| \ge p^2$.
We conclude that $|C_P(x)| = p^2$ and hence $|x^P|=p^{n-2}$.
Then Satz III.14.23 from \cite{Hupp} gives the result.
\end{proof}

\vspace{1cm}


\section{Finite simple groups that act with fixity 2}

We begin with our main hypothesis and then analyse the 2-structure.

\begin{hyp}\label{hyp2fix}
Suppose that $G$ is a finite, transitive permutation
group with permutation domain $\Omega$. Suppose further that
 $G$ acts with fixity~$2$ on $\Omega$.
\end{hyp}

The main theorem in \cite{Ronse} gives essentially enough information about the 2-structure for the classification of all finite simple groups that act with fixity 2.  
Unfortunately, we were not aware of Ronse's article when we wrote \cite{MW2}, and there is a mistake in our own lemma about the 2-structure,
Lemma 2.12 of \cite{MW2}. 
The lemma looks almost as Lemma \ref{anika2} below, which is a corrected version. But in Part (2) we overlooked the possibility that
point stabilisers could have index 2.

\begin{lemma}\label{anika2}
Suppose that Hypothesis \ref{hyp2fix} holds, that $\alpha \in \Omega$ and that $S \in \sy_2(G)$.
Then one of the following is true:

(1) $S_\alpha = 1$.

(2) $S$ is dihedral or semi-dihedral and $|S_\alpha| = 2$ or $|S:S_\alpha|=2$.

(3) $|S| \ge 4$, there is a unique $S$-orbit of size 2 and all other $S$-orbits are regular. In this case $G$ has a normal subgroup of index either 2 or $|S|$ that is a Frobenius group.

(4) $S \le  G_\alpha$, in particular $|\Omega|$ is odd. 
\end{lemma}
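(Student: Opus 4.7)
My plan is to first dispose of the extremes: if $S_\alpha = 1$ we are in (1), while if $S_\alpha = S$ then $S \le G_\alpha$ and $|\Omega|$ is odd, giving (4). Under the remaining assumption $1 \ne S_\alpha < S$, I would apply Lemma \ref{normaliser}(i) with $X = S_\alpha$, together with the fact that a proper subgroup of a 2-group is strictly contained in its normaliser, to obtain $|N_S(S_\alpha):S_\alpha| = 2$. Setting $T := N_S(S_\alpha)$ and combining fixity 2 (which gives $|\FO(H)| \le 2$ for any non-trivial $H$) with the observation that any $t \in T \setminus S_\alpha$ sends $\alpha$ to a point with the same stabiliser $S_\alpha$, I conclude $\FO(S_\alpha) = \{\alpha, \beta\}$ with $\beta \ne \alpha$, $S_\beta = S_\alpha$, and $T$ swapping the two points.

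The substantive step is the dichotomy $T = S$ versus $T < S$. In the case $T < S$ (i.e.\ $[S:S_\alpha] \ge 4$), I would first show $|S_\alpha| = 2$: otherwise, choosing $u \in N_S(T) \setminus T$, the conjugate $uS_\alpha u^{-1}$ is a distinct index-2 subgroup of $T$, so $S_\alpha \cap uS_\alpha u^{-1}$ has order at least 2 and fixes each of $\alpha$, $\beta$, $u\alpha$; fixity 2 would then force $u\alpha = \beta$, contradicting $u \notin T$ (since elements of $S$ mapping $\alpha$ to $\beta$ normalise $S_\alpha$). With $|S_\alpha| = 2$, the unique involution $y \in S_\alpha$ has $\FO(y) = \{\alpha, \beta\}$ and Lemma \ref{normaliser}(ii) yields $|C_S(y)| \le 4$; since $|S| \ge 8$, Lemma \ref{anika1} forces $S$ to have maximal class. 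The generalised quaternion case is excluded because its sole involution is central, forcing $|C_S(y)| = |S| \ge 8$, and so $S$ is dihedral or semi-dihedral and we land in case (2).

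In the case $T = S$ (i.e.\ $[S:S_\alpha] = 2$), the standard fixed-point lemma for 2-groups gives $Z(S) \cap S_\alpha \ne 1$, so I would pick an involution $z \in Z(S) \cap S_\alpha$ with $\FO(z) = \{\alpha, \beta\}$; hence $z \notin S_\gamma$ for every $\gamma \notin \{\alpha, \beta\}$, and since any non-trivial element of $S_\alpha \cap S_\gamma$ would fix $\alpha$, $\beta$, and $\gamma$, also $S_\alpha \cap S_\gamma = 1$, forcing $|S_\gamma| \le 2$. If every such $S_\gamma$ is trivial, then $\{\alpha, \beta\}$ is the unique non-regular $S$-orbit and we are in case (3). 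Otherwise, some $\gamma$ has $S_\gamma = \langle y\rangle$ for an involution $y \in S \setminus S_\alpha$, and since $z \in C_S(y)$ we cannot have $\FO(y) = \{\gamma\}$ (which would give $C_S(y) \le S_\gamma$ by Lemma \ref{normaliser}(ii)); so $|\FO(y)| = 2$ and Lemma \ref{normaliser}(ii) yields $|C_S(y)| \le 4$. Lemma \ref{anika1} then forces $S$ to have maximal class, the quaternion possibility is excluded by the presence of the two distinct involutions $z, y$, and $S$ is dihedral or semi-dihedral, placing us in case (2) with $[S:S_\alpha] = 2$.

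The hardest step I anticipate is producing the Frobenius normal subgroup of index exactly 2 or $|S|$ required in case (3); this should follow from a Burnside-style normal-complement or transfer argument exploiting that outside $\{\alpha, \beta\}$ every $S$-orbit is regular, combined with Lemma \ref{charfrob} to recognise the Frobenius property, but the bookkeeping needed to realise precisely the announced indices is the most delicate part of the proof.
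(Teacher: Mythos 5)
Your overall architecture matches the paper's: handle the extremes $S_\alpha=1$ and $S_\alpha=S$, then split according to whether $|S:S_\alpha|=2$ or not, funnel the first branch of that dichotomy into maximal class via Lemma \ref{normaliser}(ii) and Lemma \ref{anika1}, and in the $|S:S_\alpha|=2$ branch separate the subcase where some further point has a non-trivial $S$-stabiliser (again landing in (2)) from the subcase where $\{\alpha,\beta\}$ is the only non-regular orbit. Your derivation of the initial dichotomy from $N_S(S_\alpha)$ and the conjugation argument with $u\in N_S(N_S(S_\alpha))\setminus N_S(S_\alpha)$ is a self-contained replacement for the paper's citation of Lemma 2.11 of \cite{MW2}, and it is correct. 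Two small points: when $|S:S_\alpha|=2$ and $|S|=4$, Lemma \ref{anika1} does not apply (it needs $|S|\ge 8$); the paper sidesteps this by observing that for $|S|=4$ the two alternatives in (2) coincide, and you should say something similar. Also note that in your $T<S$ branch you only conclude $|S_\alpha|=2$, which is the first alternative of (2), exactly as in the paper.

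The genuine gap is the second assertion of case (3): that $G$ has a normal subgroup of index $2$ or $|S|$ which is a Frobenius group. You explicitly defer this to an unspecified ``Burnside-style'' argument, but this is where most of the actual work in the paper's proof lies, and a single transfer argument does not cover it. The paper fixes $x\in S\setminus S_\alpha$ with $\langle x\rangle$ of order $2^m$ and splits into two cases. If $2^m=|S|$, then $S$ is cyclic and Burnside's normal $p$-complement theorem gives a normal $2$-complement $N$; one must then analyse the $N$-orbits separately: if they are regular there are exactly two of them, interchanged by $x$, and $N\cdot S_\alpha$ is a normal subgroup of index $2$ acting as a Frobenius group on $\alpha^N$; if they are not regular, then $N$ (of odd order, hence unable to act with fixity $2$ by Lemma \ref{normaliser}(iii)) acts as a Frobenius group on its orbits and has index $|S|$. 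If $2^m<|S|$, transfer in the Burnside sense is unavailable; instead one uses a parity argument: $x$ has an even number of $2^m$-cycles on $\Omega\setminus\{\alpha,\beta\}$ and transposes $\alpha$ and $\beta$, hence acts as an odd permutation, so $G$ has a normal subgroup $M$ of index $2$ with $x\notin M$; one then checks $M\cap S=S_\alpha$, that $M$ has two orbits interchanged by $x$, and that the fixity hypothesis forces $M$ to act as a Frobenius group on each. None of this ``bookkeeping'' is routine enough to omit, and in particular the index-$|S|$ alternative only emerges from the non-regular subcase of the cyclic analysis, which your sketch does not anticipate.
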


\begin{proof}
Suppose that (1) and (4) do not hold.
In particular $1 \neq S_\alpha \neq S$. 
Then $G_\alpha$ has even order, and the size of $\Omega$ is even as well, which implies that $|G|$ is divisible by $4$.
We use Lemma 2.11 of \cite{MW2} and have two possibilities now, given that (4) does not hold:
$S$ is dihedral or semi-dihedral and $|S_\alpha|=2$ 
or $|S_\alpha|=\frac{|S|}{2}$ and there exists some $\beta \in \Omega$ such that $\alpha \neq \beta$, $S_\alpha=S_\beta$ and some element in $S$ interchanges $\alpha$ and $\beta$. The first case is covered by (2), and we mention that these cases can coincide if $|S|=4$.
In the second case, we take $\beta \in \Omega$ with the corresponding properties and we study the possible orbit lengths for $S$.
Since $|S:S_\alpha|=2$ and there exists some $s \in S$ such that $\alpha^s=\beta$, we see that all elements in $S \setminus S_\alpha$ interchange $\alpha$ and $\beta$. Therefore $\alpha^S$ is an orbit of size 2.
Moreover, the elements in $S_\alpha^\#$ fix exactly two points in $\Omega$, namely $\alpha$ and $\beta$, and 
this means that the remaining $S_\alpha$-orbits on $\Omega$ are regular. Thus, for each $\omega \in \Omega \setminus \{\alpha, \beta\}$, the orbit $\omega^S$ has length $|S_\alpha|(=\frac{|S|}{2})$ or $|S|$.

First we suppose that there is some $\omega \in \Omega \setminus \{\alpha, \beta\}$
such that $|\omega^S|=\frac{|S|}{2}$. Then $|S_\omega|=2$ and a combination of Lemma \ref{normaliser}(ii) and Lemma \ref{anika1} gives that $S$ has maximal class. Then, by 3.11 in \cite{Hupp}, it is a quaternion, dihedral or semi-dihedral group. But the quaternion group case is impossible because $S_\omega \neq Z(S)$ is generated by an involution. This gives (2) again. 
Now we suppose that $\alpha^S=\{\alpha,\beta\}$ is the only non-regular $S$-orbit.
Let $x \in S\setminus S_\alpha$, $X:=\langle x \rangle$ and $m \in \N$ be such that $|X|=2^m$.
Also, let $\omega \in \Omega \setminus \{\alpha, \beta\}$.
Then $|\omega^S|=|S|$ and hence $|\omega^X|=2^m$, which gives two cases:

Case 1: $2^m=|S|$. Then $S$ is cyclic and $G$ has a normal $2$-complement $N$ by Burnside's Theorem. 
If $N$ has regular orbits, then $|\Omega|=|G:G_\alpha|=\frac{|S|\cdot |N|}{|G_\alpha|}=|S:S_\alpha| \cdot |\alpha^N|=2 \cdot |N|$, which implies that $N$ has exactly two regular orbits, and $x$ interchanges them.
In particular $\Omega=\alpha^N \cup \beta^N$.
Now $N \cdot S_\alpha$ is a subgroup of $G$ of index 2, i.e. a normal subgroup, that acts as a Frobenius group on $\alpha^N$. This is included in (3).
If $N$ does not have regular orbits, then it acts transitively and with fixity at most 2 on its orbits, and the orbits have length at least 3. Fixity 2 is not possible because $N$ has odd order, and therefore $N$ acts as a Frobenius group on its orbits. Given that $|N|=|G:S|$, this leads to (3) again.

Case 2: $2^m<|S|$. Then $x$ has an even number of regular orbits on $\Omega \setminus \{\alpha, \beta\}$, and it interchanges $\alpha$ and $\beta$, and therefore it acts like an odd permutation on $\Omega$.
This means that the image of $G$ in $\Sym_{\Omega}$ is not contained in the alternating group, and therefore $G$ has a normal subgroup $M$ of index 2 that does not contain $x$. 
We see that $G=M \cdot X$ and we let $T:=M \cap S$. Then $|S:T|=2$, all elements in $T^\#$ act as even permutations on $\Omega \setminus \{\alpha, \beta\}$ and fix $\alpha$ and $\beta$, and in particular $T=S_\alpha$.
Together with the fact that $|G:M|=2$, this implies that $M$ does not act transitively on $\Omega$, but it has two orbits of equal size that are interchanged by $x$. In particular $G_\alpha \le M$,
and we recall that $S_\beta=S_\alpha \le G_\alpha \le M$ fixes two points. Together with the global fixity 2 
hypothesis, this means that $M$ acts as a Frobenius group on its orbits. Again, this is (3).
\end{proof}

As a direct consequence, we see:

\begin{cor}\label{orbitlength}
Suppose that Hypothesis \ref{hyp2fix} holds and that $S \in \sy_2(G)$.
Then the orbits of $S$ on $\Omega$ have length 
$1,~ 2, ~\frac{|S|}{2}$ or $|S|$.
\end{cor}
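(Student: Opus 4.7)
The plan is to reduce the statement directly to Lemma \ref{anika2}. Take any $S$-orbit on $\Omega$ and fix a point $\alpha$ in it. Since the orbit length equals $|\alpha^S| = |S:S_\alpha|$, it suffices to show that $|S_\alpha| \in \{1,\, 2,\, |S|/2,\, |S|\}$. I would invoke Lemma \ref{anika2} applied to $\alpha$ and check, case by case, that the stabiliser $S_\alpha$ always has one of these four indices in $S$.

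In detail, case (1) of Lemma \ref{anika2} gives $S_\alpha = 1$, hence orbit length $|S|$; case (4) gives $S_\alpha = S$, hence orbit length $1$. In case (2), $S$ is dihedral or semi-dihedral and either $|S_\alpha| = 2$ (orbit length $|S|/2$) or $|S:S_\alpha| = 2$ (orbit length $2$). Finally, case (3) provides a unique $S$-orbit of size $2$ and regular $S$-orbits otherwise, so the orbit length is either $2$ or $|S|$. Collecting these values yields exactly the list $\{1,\, 2,\, |S|/2,\, |S|\}$.

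The argument is purely a bookkeeping consequence of Lemma \ref{anika2} and there is no real obstacle, since the four cases of that lemma already exhaust the possibilities for an arbitrary point $\alpha \in \Omega$. The only minor subtlety worth noting is that the sub-cases can overlap (for instance when $|S|=4$, the values $2$ and $|S|/2$ coincide, as already pointed out in the proof of Lemma \ref{anika2}); this causes no harm, as the stated list is allowed to contain repeated values.
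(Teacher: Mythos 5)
Your proposal is correct and is exactly the intended argument: the paper states the corollary as a ``direct consequence'' of Lemma~\ref{anika2} without writing out the case check, and your per-orbit application of the four cases of that lemma is precisely the bookkeeping being left to the reader. Nothing is missing.
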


Based on our results on the $2$-structure we have a natural case distinction depending on whether the point stabilisers
have even or odd order.
Revisiting our results from \cite{MW2}, we first look at specific groups (and series of groups) that might appear as examples with point stabilisers of even order.
Then, for the remaining groups, we only need to consider the case where point stabilisers have
odd order, and our discussion will be brief because we can directly quote results from \cite{MW2} for this.


\begin{lemma}\label{A7}
Suppose that $G=\A_7$ or $G=\M_{11}$. Then
there is no set $\Omega$ such that $(G, \Omega)$ satisfies
Hypothesis \ref{hyp2fix}.

If $\Omega$ is a set such that $(\PSL_3(4),\Omega)$ satisfies Hypothesis \ref{hyp2fix}, then
the point stabilisers have order 5.
\end{lemma}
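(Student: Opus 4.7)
For each of the three groups $G$ the problem reduces to determining which subgroups $H \le G$, up to conjugacy, give a transitive action $G \curvearrowright G/H$ of fixity exactly $2$. Recall that the number of fixed points of $g \in G^{\#}$ on $G/H$ is
\[
\chi(g) \;=\; \frac{|C_G(g)|\cdot |g^G \cap H|}{|H|},
\]
so the fixity equals $\max_{g \ne 1}\chi(g)$. The whole argument amounts to bounding or computing this quantity for the relevant candidates $H$.

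The first reduction is the prime-by-prime constraint of Lemma~\ref{normaliser}(v): for every odd prime $p$ dividing $|G|$, either $p \nmid |H|$ or $H$ contains a full Sylow $p$-subgroup of $G$. The second is the 2-part analysis of Lemma~\ref{anika2}. For $G=\A_7$ (Sylow 2-subgroup dihedral of order $8$) and $G=\M_{11}$ (Sylow 2-subgroup semidihedral of order $16$) all four cases of Lemma~\ref{anika2} are a priori possible, but case~(3) forces a normal Frobenius subgroup of index $2$ or $|S|$, which restricts things severely. For $\PSL_3(4)$, simplicity of $G$ rules out case~(3), and case~(2) is excluded because the Sylow 2-subgroup of $\PSL_3(4)$ (a special group of order $64$) is neither dihedral nor semidihedral; hence any non-trivial even-order $H$ must satisfy $S \le H$.

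After these reductions the list of candidate orders is short. For $\A_7$ and $\M_{11}$ I would enumerate the $G$-classes of subgroups of each permissible order and evaluate the character formula on every $G$-class: this is essentially mechanical in \texttt{GAP} (cf.\ the appendix of~\cite{HW}) and in every case the maximum differs from $2$. For $\PSL_3(4)$ the odd-order candidates surviving Lemma~\ref{normaliser}(v) are $|H|\in\{1,5,7,9\}$ (the larger options such as $|H|\in\{35,45,63\}$ are excluded by the nonexistence of elements of these orders in $\PSL_3(4)$ and the metacyclic constraint from~\cite{MW2}). The trivial stabiliser is discarded; for $|H|\in\{7,9\}$, a direct evaluation of $\chi$ on a Sylow generator gives $\chi \ge 3$ because the Weyl-group action of $N_G(H)/H$ on $H^{\#}$ has orbits of size at least $3$; and only $|H|=5$ (where $N_G(H)/H$ has order $2$ and inverts $H$, so $H^{\#}$ splits into two $G$-classes of size $2$) yields exactly $\chi = 2$ on the order-$5$ classes and $\chi = 0$ elsewhere. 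The remaining even-order candidates---those $H$ with $S \le H$, contained in one of the two maximal parabolics $2^4{:}\A_5$ of order $960$---are discarded by combining Lemma~\ref{normaliser}(i) applied with $X = S$ (which already rules out $H = S$ since $N_G(S)$ is the Borel subgroup of order $192$ and $|N_G(S):S|=3 > 2$) with a direct evaluation of $\chi$ on an involution in the remaining overgroups of $S$ inside the parabolic.

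The main obstacle will be the $\PSL_3(4)$ even-order case, where several non-conjugate overgroups of $S$ inside the two classes of parabolics must each be inspected; the structural bounds above cut the problem down to a short explicit list, which is most cleanly disposed of via \texttt{GAP}, complementing the theoretical reductions.
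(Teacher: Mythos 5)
Your proposal is correct and ultimately takes the same route as the paper: the paper's entire proof is a \texttt{GAP} verification (via the table-of-marks function \texttt{TestTom} in the appendix, which computes exactly the fixed-point counts your character formula $\chi(g)=|C_G(g)|\,|g^G\cap H|/|H|$ encodes), and your argument likewise bottoms out in a mechanical check of a short list of candidate stabilisers. The theoretical pre-reductions you add via Lemma~\ref{normaliser}(v) and Lemma~\ref{anika2} are sound (and correctly handled for $\PSL_3(4)$, e.g.\ $|N_G(S):S|=3$ killing $H=S$), but they are not needed for, and do not replace, the final computation the paper performs.
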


\begin{proof}
This can be checked using \texttt{GAP} (see \cite{GAP} and \cite{HW}).
\end{proof}

Next we analyse the general behaviour for some series of groups 
because they appear several times.

\begin{lemma}\label{PSL2}
Suppose that $p$ is an odd prime and that $m \in \N$. Let $q:=p^m \ge 5$, let
$G:=\PSL_2(q)$ and suppose that $(G,\Omega)$ satisfies
Hypothesis \ref{hyp2fix}. Then one of the following holds:

\begin{enumerate}
\item[(1)]
$q=5$, $|\Omega| \in \{5, 6, 10,12,20, 30\}$, and the point stabilisers in $G$ have structure $\A_4$, $D_{10}$,
$D_6$, $\C_5$, $\C_3$ or $\C_2$, respectively.

\item[(2)]
$q=7$, $|\Omega|=14$ and the point stabilisers are isomorphic to $\A_4$.

\item[(3)]
$|\Omega| =q+1$ and $G$ acts on $\Omega$ as on the set of cosets of
the normaliser of a Sylow $p$-subgroup of $G$.

\item[(4)]
$|\Omega| = q(q-1)$ and $G$ acts on $\Omega$ as on the set of cosets
of a cyclic subgroup of order $\frac{q+1}{2}$ (if $p$ is odd) or $q+1$ (if $p=2$).

\item[(5)]
$|\Omega|=q(q+1)$ and $G$ acts on $\Omega$ as on the set of cosets
of a cyclic subgroup of order $\frac{q-1}{2}$ (if $p$ is odd) or $q-1$ (if $p=2$).

\end{enumerate}
\end{lemma}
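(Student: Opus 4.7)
The plan is to identify $\Omega$ with the coset space $G/H$, where $H := G_\alpha$, so that classifying fixity-2 actions of $G$ becomes a problem about the subgroup structure of $H$. Recall that for $x \in G^{\#}$ the number of cosets $gH$ fixed by $x$ equals $|x^G \cap H| \cdot |C_G(x)| / |H|$, so Hypothesis~\ref{hyp2fix} translates into the constraint that this quantity is at most $2$ for every $x \neq 1$ and equals $2$ for at least one $x$. First I would invoke Dickson's theorem, which enumerates the subgroups of $\PSL_2(q)$ up to conjugacy: cyclic groups of order dividing $p^m$, $(q-1)/2$ or $(q+1)/2$; dihedral groups of order dividing $q \pm 1$; subgroups of a Borel subgroup (including the Borel itself, the normaliser of a Sylow $p$-subgroup); the exceptional groups $\A_4$, $\S_4$, $\A_5$; and the subfield subgroups $\PSL_2(q')$ and $\PGL_2(q')$ with $q' = p^{m'}$ and $m' \mid m$.

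For each candidate $H$, I would apply Lemma~\ref{normaliser}: part~(i) with $X = H$ forces $|N_G(H):H| \le 2$, which already excludes many proper inclusions, and parts (i)--(ii) applied to cyclic subgroups $X = \langle x \rangle \le H$ control how much centralisers can stick outside $H$. The core of the proof is the resulting case analysis. When $H$ is the normaliser of a Sylow $p$-subgroup, the natural action on the projective line has every unipotent fixing one point and every semisimple element fixing exactly two, yielding case~(3). When $H$ is a maximal cyclic subgroup of order $(q+1)/2$ or $(q-1)/2$, a direct count of cosets fixed by unipotent and semisimple representatives gives fixity exactly $2$, producing (4) and (5).

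The dihedral, $\A_4$, $\S_4$ and $\A_5$ cases are handled by focusing on an involution $i \in H$. Since $|C_G(i)| = q \pm 1$ (after division by the centre) while $|C_H(i)|$ is one of a small list of bounded values, Lemma~\ref{normaliser}(ii) forces $q \pm 1 \le 2|C_H(i)|$, which confines $q$ to the values $5$ and $7$. A short enumeration of the resulting coset actions, together with the exceptional isomorphism $\PSL_2(5) \cong \A_5$ (whose own subgroups are dihedral, cyclic or $\A_4$), reproduces exactly the degrees and stabilisers listed in (1) and (2); this finite check can be confirmed with \texttt{GAP} if needed.

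The main obstacle will be ruling out the subfield subgroups $H = \PSL_2(q')$ or $\PGL_2(q')$ with $q' \mid q$ but $q' < q$, because these are themselves rich in involutions and unipotents and their conjugates can intersect non-trivially. Here I would single out a unipotent $u \in H$: its centraliser in $G$ is a full Sylow $p$-subgroup of order $q$, whereas $C_H(u)$ has order $q'$, so Lemma~\ref{normaliser}(ii) gives $q/q' \le 2$. The remaining borderline ratio $q = q'^2$ or very small extensions can be excluded by examining a semisimple element of order dividing $(q'+1)/2$ and observing that its centraliser in $G$ is too large, violating the fixity-2 bound. Once every subgroup type from Dickson's list is either excluded or shown to match one of (1)--(5), the classification is complete.
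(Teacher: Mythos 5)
Your proposal is correct in outline, but it takes a genuinely different route from the paper. The paper's own proof is essentially a reduction: the generic cases (3)--(5) are obtained by quoting Lemma~3.11 of \cite{MW2}, which the authors point out depends only on the subgroup structure of $\mathrm{PSL}_2(q)$ and on (the analogue of) Lemma~\ref{normaliser}, and is therefore unaffected by the flawed Lemma~2.12 of \cite{MW2}; the exceptional possibilities for $q=5$ and $q=7$ are then read off from a table-of-marks computation in \texttt{GAP}. You instead reprove the classification from scratch via Dickson's list of subgroups together with the fixed-point count $|x^G\cap H|\,|C_G(x)|/|H|$ on $G/H$; this is self-contained and makes the mechanism visible, at the cost of a longer case analysis. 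Three points in your sketch need tightening before it is a complete argument. First, Dickson's list also contains the many non-maximal subgroups (proper subgroups of the Borel of the form $P_0\rtimes C_d$, cyclic and dihedral groups of order properly dividing $q\pm1$, and the two classes of $\A_4$, $\Sym_4$, $\A_5$ for various congruences on $q$); these must all be run through Lemma~\ref{normaliser}(i)/(ii) or the counting formula, not only the five families you name. Second, the involution-centraliser bound in the dihedral and exceptional cases only confines $q$ to a short finite list (roughly $q\le 17$, and in particular $q=9$ must be treated separately since $\mathrm{PSL}_2(9)\cong\A_6$ contains $\A_5$ and $\Sym_4$ with involution centralisers of index exactly $2$); the surviving cases are then eliminated by applying Lemma~\ref{normaliser}(ii) to elements of order $3$, or by the finite check you defer to \texttt{GAP} -- which is exactly what the paper does for $q=5,7$. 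Third, a minor simplification: since $p$ is odd here, the subfield case is immediate, because $q/q'\ge p\ge 3>2$, so there is no borderline ratio $q=q'^2$ to worry about. With those details supplied, your argument goes through and recovers the statement independently of \cite{MW2}.
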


\begin{proof}
For the subgroup
structure of $G$ we refer to Theorem 6.5.1 in \cite{GLS3}, and we notice the
special cases $\PSL_2(4) \cong \PSL_2(5) \cong \A_5$ and $\PSL_2(9) \cong \A_6$.
First we look at the small groups, and we will see that they give some extra examples whereas the larger groups exhibit generic behaviour.
Checking $\PSL_2(5)$ with \texttt{GAP}, we obtain the possibilities that are listed, bearing in mind the subgroup structure of the group.
If we check $\PSL_2(7)$ with \texttt{GAP}, then the number 12 appears twice for the order of a point stabiliser in a fixity 2 action because there are two conjugacy classes
of subgroups isomorphic to $\A_4$ that appear as point stabilisers. This case is (2). The other possibilities with point stabilisers of order 3, 4, and 21, respectively, are special cases of the statements in (5), (4), and (3).

For the generic behaviour we quote Lemma 3.11 of \cite{MW2}, which only depends on properties in Lemma \ref{normaliser} and not on the flawed Lemma 2.12 in \cite{MW2}.
\end{proof}

\begin{lemma}\label{Suzuki}
Suppose that $m \in \N$ is odd, that $q:=2^m \ge 8$ and that $G=\Sz(q)$. 
Suppose further that $(G,\Omega)$ satisfies
Hypothesis \ref{hyp2fix}.
Then one of the following holds:

(1) $|\Omega|= q^2 + 1$ and $G$ acts $2$-transitively in its natural action, or

(2) $|\Omega|= q^2(q^2 + 1)$ and $G$ acts as on the set of cosets of a subgroup
of order $q - 1$.
\end{lemma}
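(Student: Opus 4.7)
The plan is to combine Lemma \ref{anika2} applied to a Sylow $2$-subgroup with Suzuki's classification of the maximal subgroups of $\Sz(q)$, and then to use Lemma \ref{normaliser} to pin down $G_\alpha$ exactly. The argument is parallel to the generic case in the proof of Lemma \ref{PSL2}.

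Let $S \in \sy_2(G)$, so $|S| = q^2$ and $S$ is a Suzuki $2$-group; in particular $S$ is neither dihedral nor semi-dihedral, which rules out Case $(2)$ of Lemma \ref{anika2}. Since $G$ is simple of order $q^2(q-1)(q^2+1)$, it has no proper normal subgroup of index $2$ or $q^2$, so Case $(3)$ of Lemma \ref{anika2} is also impossible. Hence either $|G_\alpha|$ is odd (Case $(1)$) or $S \le G_\alpha$ (Case $(4)$).

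Suppose first that $S \le G_\alpha$. By Suzuki's maximal subgroup theorem, the only maximal subgroup of $G$ containing $S$ is $N_G(S)$, a Frobenius group of order $q^2(q-1)$ with cyclic complement $T$ of order $q-1$. Write $G_\alpha = S \rtimes D$ with $D \le T$. For any $x \in D^\#$, the Frobenius structure of $N_G(S)$ gives $C_{G_\alpha}(x) = D$, while $C_G(x) = T$ (semisimple elements of $\Sz(q)$ are self-centralising in their torus). Lemma \ref{normaliser}(ii) forces $|T : D| \le 2$, and since $|T| = q-1$ is odd we obtain $D = T$, so $G_\alpha = N_G(S)$. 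Thus $|\Omega| = q^2+1$ and we are in the natural $2$-transitive action, i.e.\ (1).

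Now suppose that $|G_\alpha|$ is odd. Every odd-order subgroup of $\Sz(q)$ is cyclic and contained in one of the three cyclic maximal tori of order $q-1$, $q+2r+1$, $q-2r+1$, with $r = 2^{(m-1)/2}$. Let $T$ be such a torus with $G_\alpha \le T$ and pick $x \in G_\alpha^\#$; then $C_G(x) = T$ and $C_{G_\alpha}(x) = G_\alpha$, so Lemma \ref{normaliser}(ii) yields $|T : G_\alpha| \le 2$ and hence $G_\alpha = T$ because $|T|$ is odd. Suzuki's structure theorem gives $|N_G(T) : T| = 2$ if $T$ has order $q-1$ and $|N_G(T) : T| = 4$ otherwise. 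Applying Lemma \ref{normaliser}(i) with $X = G_\alpha = T$ we need $|N_G(T) : T| \le 2$, which eliminates the two anisotropic tori. Therefore $G_\alpha$ is cyclic of order $q-1$, $|\Omega| = q^2(q^2+1)$, and we are in (2).

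The main obstacle is simply marshalling the structural facts about $\Sz(q)$ used above, namely the centraliser and normaliser data for its semisimple tori, the Frobenius structure of $N_G(S)$, and the cyclicity of odd-order subgroups; but all of these are classical consequences of Suzuki's original work, after which Lemma \ref{anika2} and Lemma \ref{normaliser} do the remaining work mechanically.
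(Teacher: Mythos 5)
Your overall route -- Lemma \ref{anika2} to reduce to the two cases ``$S\le G_\alpha$'' or ``$|G_\alpha|$ odd'', followed by Suzuki's subgroup structure and Lemma \ref{normaliser} to pin down $G_\alpha$ -- is the same as the paper's, which argues more tersely by citing \cite{MW2}: in the even case it applies Lemma \ref{normaliser}(i) to $Z(S)$ to force $G_\alpha=N_G(Z(S))$, and in the odd case it invokes Lemma \ref{normaliser}(v) to make $G_\alpha$ an odd-order Hall subgroup before reading off the torus of order $q-1$. Your odd-order branch is correct and in fact more explicit than the paper's (self-centralising tori give $G_\alpha=T$, and the normaliser indices $2$ versus $4$ eliminate the two anisotropic tori via Lemma \ref{normaliser}(i)).

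There is, however, one genuine gap in your even-order branch. After writing $G_\alpha=S\rtimes D$ with $D\le T$, you argue ``for any $x\in D^\#$\dots'', which is vacuous when $D=1$; your argument therefore does not exclude the possibility $G_\alpha=S$. This case must be ruled out separately: for instance, an involution $t\in Z(S)^\#$ has $C_G(t)=S$ and $t^G\cap S=Z(S)^\#$, so $t$ would fix $|t^G\cap S|=q-1\ge 7$ points of $G/S$, contradicting fixity $2$; alternatively (and this is the paper's device) apply Lemma \ref{normaliser}(i) to $X=Z(S)\le G_\alpha$ to get $|N_G(Z(S)):N_{G_\alpha}(Z(S))|\le 2$, which already forces a subgroup of index at most $2$ of the Borel subgroup $N_G(Z(S))=S\rtimes T$ into $G_\alpha$ and hence, since $|G:G_\alpha|$ is odd, $G_\alpha=N_G(S)$. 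With that one case patched, your proof is complete; the remaining structural inputs you cite (uniqueness of the maximal overgroup of $S$, cyclicity of odd-order subgroups, centraliser and normaliser data for the tori) are indeed all in Suzuki's work, respectively Theorem 6.5.4 of \cite{GLS3}.
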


\begin{proof}
We can mostly argue as in \cite{MW2}, using Theorem 6.5.4 in \cite{GLS3} for the subgroup structure.
Let $\alpha \in \Omega$ and $H:=G_\alpha$. If $H$ has even order, then we
take the $2$-structure of $G$ into account and the fact that $G$ is simple, and we see that Case (4) of Lemma \ref{anika2} holds.
Let $S \in \sy_2(G)$.
Then Lemma \ref{normaliser}(i) gives that a subgroup of index 2 of the maximal subgroup $N_G(Z(S))$ is contained in $H$, 
and also $|G:H|$ is odd, and therefore $N_G(Z(S))$ is a point stabiliser. This leads to (1). 

If $H$ has odd order, then it is a Hall subgroup of $G$, by Lemma \ref{normaliser}(v), and this leads to possibility (2).
\end{proof}

We do not discuss the series $\PSU_3(q)$ again, because we can quote Lemma 3.15 from \cite{MW2}. It
only depends on the subgroup structure and Lemma \ref{normaliser}. 
For the same reason we do not revise the analysis of the series $\PSL_3(q)$ in odd characteristics and with point stabilisers of even order (see Lemma 3.16 in \cite{MW2}). 
Small alternating and symmetric groups can be investigated with \texttt{GAP}, and the discussion in \cite{MW2} mostly does not depend on Lemma 2.12 there, which is why we can quote results directly. There is one exception, namely the central extension $2\A_n$, where $n \in \N$.
Therefore we revisit these groups here.

The following lemma is a preparation and we prove it in more generality than necessary, for future applications.

\begin{lemma}\label{centre}
Suppose that $G$ is a finite, transitive permutation
group with permutation domain $\Omega$. Suppose further that
 $G$ acts with fixity~$k \in \N$ on $\Omega$, that
$Z \le Z(G)$ and that $|\Omega| >k \cdot |Z|$.
Then $G/Z$ acts transitively,
non-regularly, and with fixity at most $k$ on the set of $Z$-orbits, with point stabilisers of the same size as in $G$.
\end{lemma}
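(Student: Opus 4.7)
The plan is to analyse the induced action of $G/Z$ on the set $\bar\Omega$ of $Z$-orbits of $\Omega$, which is well-defined because $Z \le Z(G)$ is normal in $G$. Transitivity of $G/Z$ on $\bar\Omega$ is immediate from the transitivity of $G$ on $\Omega$, so the substantive points to establish are: (a) each $Z$-orbit has size exactly $|Z|$ (so that stabiliser sizes transfer correctly); (b) the induced action is non-regular; and (c) the induced fixity is at most $k$.

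The first step will be the key structural observation that $Z \cap G_\alpha = 1$ for every $\alpha \in \Omega$. Indeed, if $z \in Z \cap G_\alpha$ then for any $g \in G$ centrality gives $(\alpha^g)^z = \alpha^{gz} = \alpha^{zg} = \alpha^g$, so $z$ fixes every point of $\Omega$; the fixity hypothesis together with $|\Omega| > k \cdot |Z| \ge k$ then forces $z = 1$. As immediate consequences, $Z$ acts semiregularly on $\Omega$ (giving (a) and hence $|\bar\Omega| = |\Omega|/|Z|$), and $(G/Z)_{\bar\alpha} = ZG_\alpha/Z \cong G_\alpha / (Z \cap G_\alpha) = G_\alpha$, which proves the claim about point stabiliser sizes. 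Non-regularity in (b) follows because fixity $k \ge 1$ forces $G_\alpha \ne 1$, so point stabilisers in $G/Z$ are also non-trivial.

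The main step is (c), which I would prove by a double-counting argument. For a non-trivial element $\bar g = gZ \in G/Z$, consider the preimage
\[
F \;:=\; \{\omega \in \Omega : \omega^g \in \omega^Z\}
\]
of $\textrm{fix}_{\bar\Omega}(\bar g)$ under the quotient map $\Omega \to \bar\Omega$. Because $Z$ is semiregular on $\Omega$, for each $\omega \in F$ the condition $\omega^g \in \omega^Z$ uniquely determines an element $z_\omega \in Z$ with $\omega^g = \omega^{z_\omega}$. Rewriting this as $\omega \in \textrm{fix}_\Omega(g z_\omega^{-1})$ and partitioning $F$ by the value of $z_\omega$ gives the disjoint decomposition
\[
F \;=\; \bigsqcup_{z \in Z} \textrm{fix}_\Omega(g z^{-1}).
\]
Since $g \notin Z$, every element $g z^{-1}$ is non-trivial and hence has at most $k$ fixed points in $\Omega$, so $|F| \le k \cdot |Z|$. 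Combining this with the equality $|F| = |Z| \cdot |\textrm{fix}_{\bar\Omega}(\bar g)|$ from (a) yields $|\textrm{fix}_{\bar\Omega}(\bar g)| \le k$, as required.

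The only real obstacle is the partition step above, and it rests entirely on $Z$ acting freely on $\Omega$; this is precisely where the hypothesis $|\Omega| > k \cdot |Z|$ enters, via the opening argument that $Z \cap G_\alpha = 1$. The remaining content is routine quotient-group bookkeeping.
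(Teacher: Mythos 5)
Your proof is correct, and its overall architecture (semiregularity of $Z$, identification of the stabiliser of $\bar\alpha$ with $G_\alpha$, and a count of the preimage of $\textrm{fix}_{\bar\Omega}(\bar g)$ distributed among the elements $gz^{-1}$) matches the paper's. The one step you handle differently is the fixity bound: the paper argues by contradiction, assuming $k+1$ fixed $Z$-orbits and using the divisibility $|Z| \mid k$ (from its Lemma~\ref{normaliser}(iv)) together with a pigeonhole argument to find a single $z \in Z$ shared by at least $\frac{k}{|Z|}+1$ of the lifted points, whence $gz^{-1}$ fixes more than $k$ points. Your version replaces this with the direct partition $F = \bigsqcup_{z \in Z}\textrm{fix}_\Omega(gz^{-1})$, giving $|F| \le k|Z|$ and hence at most $k$ fixed orbits in one line; this is cleaner, needs no case analysis or contradiction, and in particular does not use the fact that $|Z|$ divides $k$ at all, so it is marginally more self-contained. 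Your justifications of the remaining points are also sound: $Z \cap G_\alpha = 1$ follows as you say (or even more directly from faithfulness of the permutation action), the identification $(G/Z)_{\bar\alpha} = ZG_\alpha/Z \cong G_\alpha$ replaces the paper's index computation, and non-regularity follows since fixity $k \ge 1$ forces non-trivial point stabilisers. No gaps.
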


\begin{proof}
We let $\bar \Omega$ denote the set of $Z$-orbits on $\Omega$ and we set $\bar G:=G/Z$.
Then $\bar G$ acts transitively on $\bar \Omega$.
We also recall that $G$ acts with fixity $k$ and hence $|Z|$ divides $k$, by Lemma \ref{normaliser}(iv), and $Z$ acts semi-regularly on $\Omega$.
Therefore, all elements of $\bar \Omega$ have size $|Z|$.
Let $\alpha \in \Omega$, $\bar \alpha:=\alpha^Z$ and $x \in G_\alpha^\#$.
Then $x$ stabilises $\bar \alpha$ and therefore $\bar x$ stabilises $\bar \alpha$, which means that
$\bar G$ does not act regularly on $\bar \Omega$.

Next we prove that $\bar G$ acts with fixity at most $k$, hence we let $g \in G$ and we suppose that
$\bar g$ fixes $k+1$ distinct points $\bar \omega_1$,..., $\bar \omega_{k+1}$ on $\bar \Omega$.
Let $i \in \{1,...,k+1\}$. Then the image $\omega_i^g$ is contained in $\bar \omega_i$, and we choose $z \in Z$ such that $\omega_i^g=\omega_i^z$. Now $gz^{-1} \in G_{\omega_i}$.
Since $|Z|$ divides $k$, there must be a set $J \subseteq \{1,...,k+1\}$ of size at least
$\frac{k}{|Z|} + 1$ and an element $z \in Z$ such that, for all $j \in J$, $\omega_j^g=\omega_j^z$.
In particular $gz^{-1}$ fixes $\omega_j$, and then it follows that it fixes all elements in
the orbit $\bar \omega_j$. However, this gives at least $|J|\cdot |Z|\ge
(\frac{k}{|Z|} + 1) \cdot |Z|$ fixed points in total, i.e. more than $k$.
This forces $g z^{-1} =1$,  hence $g \in Z$ and $\bar g=\bar 1$ acts trivially on $\bar \Omega$.

Finally, we look at the point stabilisers. Let $U \le G$ be such that $\bar U$ is a point stabiliser in $\bar G$.
Let $\omega \in \Omega$ be such that $\bar U$ stabilises $\bar \omega$. Then $G_\omega \le UZ$ and $G_\omega \cap Z=1$ because $Z \le Z(G)$.
Now
$$\frac{|\bar G|}{|\bar U|}=|\bar G:\bar U|=|\bar \Omega|=\frac{|\Omega|}{|Z|}=\frac{|G:G_\omega|}{|Z|}=|\bar G| \cdot \frac{1}{|G_\omega|}$$ 
and therefore
$|G_\omega|=|\bar U|$.
\end{proof}

We note that the previous lemma implies Lemma 2.14 of \cite{MW2} (in the special case where $|Z|=|Z(G)|=k=2$).

\begin{lemma}\label{alt}
Suppose that $n \in \N$, that $n \ge 5$ and that $G=2\A_n$. Then there is no set
$\Omega$ such that $(G,\Omega)$ satisfies Hypothesis \ref{hyp2fix}.
\end{lemma}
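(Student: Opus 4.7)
The plan is proof by contradiction: reduce to a statement about $\A_n$ via Lemma \ref{centre}, then invoke the classification of fixity~$2$ actions of alternating groups.

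Assume that $(G,\Omega)$ satisfies Hypothesis \ref{hyp2fix} with $G = 2\A_n$, and set $Z = Z(G) = \langle z \rangle$. Since $G$ is a perfect central extension of the simple group $\A_n$ for $n \ge 5$, its only normal subgroups are $1$, $Z$ and $G$; checking each as a possible kernel of the action against transitivity and fixity~$2$ (and using that $\A_n$ has no subgroup of index $2$) shows that the action is faithful. As $z$ is central, its fixed-point set is a $G$-orbit, hence empty (it cannot equal $\Omega$ since $z \neq 1$). So $z$ is fixed-point-free, $|\Omega|$ is even, and a short argument on the core of a point stabiliser rules out $|\Omega| \le 4$, giving $|\Omega| > 4 = k \cdot |Z|$.

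Lemma \ref{centre} now applies: $\bar G := G/Z \cong \A_n$ acts transitively, non-regularly, and with fixity at most~$2$ on $\bar \Omega := \Omega/\langle z \rangle$, with point stabilisers of the same order as in $G$. Since $\A_n$ is simple and non-abelian, Lemma \ref{charfrob} forbids it from being a Frobenius group, so the fixity cannot equal~$1$; together with non-regularity this forces $\bar G$ to act with fixity exactly~$2$. Next, I would invoke the classification of fixity~$2$ actions of alternating groups: Lemma \ref{A7} rules out $n = 7$, Lemma \ref{PSL2} (via $\A_5 \cong \PSL_2(5)$ and $\A_6 \cong \PSL_2(9)$) pins down the possibilities for $n \in \{5, 6\}$, and the remaining alternating-group analysis of \cite{MW2} handles larger $n$. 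This leaves only a short explicit list of candidates $\bar H \le \A_n$ that might occur as the stabiliser.

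For each such candidate $\bar H$ one checks whether its preimage in $2\A_n$ splits as $\bar H \times Z$, i.e.\ whether there exists a lift $\ti H$ with $\ti H \cap Z = 1$: when $\bar H$ contains an involution that lifts to an element of order~$4$ in $2\A_n$ (which happens for every involution of $\A_n$ when $n \le 7$, where $z$ is the unique involution of $2\A_n$), the split fails, and when the lift $\ti H$ does exist one has $N_G(\ti H) = \pi^{-1}(N_{\bar G}(\bar H))$ (an easy check using that $|\ti H|$ is odd), so $|N_G(\ti H):\ti H| = 2\cdot |N_{\bar G}(\bar H):\bar H|$; a direct computation shows this quantity to be greater than $2$ in every case that arises, which by Lemma \ref{normaliser}(i) contradicts fixity~$2$.

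The main obstacle is the small-$n$ casework; in practice the cleanest way to finish it is a direct \texttt{GAP} verification (see \cite{HW}) that no transitive permutation action of $2\A_n$ has fixity~$2$, carried out for the finitely many $n$ left over after the reduction above.
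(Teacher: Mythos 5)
Your proposal is correct and follows essentially the same route as the paper's proof: reduce to $\A_n$ via Lemma \ref{centre}, use the classification of fixity-$2$ actions of alternating groups (Lemma 3.7 and Theorem 3.8 of \cite{MW2}, then Lemma \ref{PSL2} via $\A_5 \cong \PSL_2(5)$ and $\A_6 \cong \PSL_2(9)$) to pin down the candidate stabilisers, and derive a contradiction in $2\A_n$ from Lemma \ref{normaliser}(i) together with $|Z(G)|=2$. Your final step (odd-order stabilisers because $z$ is the unique involution, then the normaliser-index computation) is in fact a more explicit account of what the paper compresses into its last sentence.
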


\begin{proof} Assume otherwise and let $\bar G:=G/Z(G)$. 
We recall that $G$ acts faithfully on $\Omega$ and with fixity $2$, 
and therefore
$|\Omega| \ge 4$. If 
$|\Omega| = 4$, then the faithful action gives that a section of $G$ is isomorphic to 
a subgroup of $\Sym_4$. This is impossible because $\bar G$ is simple and non-soluble.
Now we conclude that $|\Omega| \ge 5$
and that Lemma \ref{centre} is applicable.
In particular $\bar G$ satisfies Hypothesis \ref{hyp2fix}, with point stabilisers in $G$ and $\bar G$ of the same order. Then Lemma 3.7 and Theorem 3.8 in \cite{MW2} imply that
$\bar G \cong \A_5$ or $\A_6$. Lemma \ref{PSL2} gives the exact possibilities for the action, respectively, bearing in mind that 
$\A_5 \cong \,\PSL_2(5)$ and $\A_6 \cong \,\PSL_2(9)$.
But none of these are possible in $G$, with point stabilisers of the same order, because of Lemma \ref{normaliser}(i) and the fact that $|Z(G)|=2$.
\end{proof}

For many sporadic groups, it is possible to check whether or not they can act satisfying Hypothesis \ref{hyp2fix}
by using \texttt{GAP}. But we also revisited the arguments in the corresponding section of \cite{MW2} and give an alternative proof of 
Lemma 4.2 there, proving that potential examples would have point stabilisers of odd order. With this information, the remaining results (Lemma 4.3 up to Corollary 4.8 in \cite{MW2}) are applicable.


\begin{thm}\label{spor}

Suppose that $G$ is a sporadic finite simple group and that $\Omega$ is such that
$(G,\Omega)$ satisfies Hypothesis \ref{hyp2fix}. Then the point stabilisers have odd order.

\end{thm}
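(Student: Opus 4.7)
The plan is to suppose, for a contradiction, that the point stabiliser $G_\alpha$ has even order, fix $S \in \sy_2(G)$ and apply Lemma \ref{anika2}. Case (1) of that lemma is excluded by our assumption. Case (3) provides a proper nontrivial normal subgroup of $G$ of index $2$ or $|S|$ that is a Frobenius group, and this is incompatible with $G$ being a non-abelian simple group.

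Case (2) forces $S$ to be dihedral or semi-dihedral. By the classical theorems of Gorenstein--Walter and Alperin--Brauer--Gorenstein on simple groups with such Sylow 2-subgroups, the only sporadic simple group with Sylow 2-subgroup of either shape is $\M_{11}$, whose Sylow 2-subgroups are semi-dihedral of order $16$. This group is already handled in Lemma \ref{A7}.

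There remains case (4), in which $S \le G_\alpha$ and hence $|\Omega|=|G:G_\alpha|$ is odd. For each sporadic simple group $G$ I would enumerate the conjugacy classes of subgroups of odd index, equivalently the overgroups of a fixed Sylow 2-subgroup, using the ATLAS and the \texttt{GAP} character table library. For every candidate $H$ for $G_\alpha$, Lemma \ref{normaliser}(i) forces $|N_G(X):N_H(X)| \le 2$ for all nontrivial $X \le H$; it typically suffices to exhibit a single such $X$ (for instance $Z(S)$, or a small characteristic subgroup of an involution centraliser) for which this index is at least $3$, which rules $H$ out.

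The main obstacle is computational. For the smaller sporadic groups (the Mathieu and Janko groups, Higman--Sims, McLaughlin, Held, Suzuki, Rudvalis and O'Nan) a direct exhaustive check in \texttt{GAP} along the lines of Lemma \ref{A7} finishes the argument. For the large sporadic groups (the Conway and Fischer groups, Harada--Norton, Thompson, Lyons, Baby Monster and Monster) explicit enumeration of subgroups is not feasible, so one must instead combine the ATLAS list of maximal subgroups of odd index with information on the involution centralisers and $2$-local structure in order to produce the witnessing subgroup $X$ by hand and thereby contradict fixity 2.
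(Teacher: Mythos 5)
Your treatment of cases (1), (2) and (3) of Lemma \ref{anika2} matches the paper's argument exactly: (1) is excluded by assumption, (3) contradicts simplicity, and (2) is settled by the Gorenstein--Walter and Alperin--Brauer--Gorenstein classifications together with Lemma \ref{A7} for $\M_{11}$. The problem is case (4). What you propose there --- enumerating the odd-index subgroups of each sporadic group and hunting for a witnessing subgroup $X$ violating Lemma \ref{normaliser}(i) --- is not a proof but a programme, and you yourself concede that it breaks down for the large sporadic groups, where neither exhaustive subgroup enumeration nor a by-hand construction of $X$ from the $2$-local structure is actually carried out. As it stands, the argument for case (4) is incomplete for the Conway and Fischer groups, $HN$, $Th$, $Ly$, $B$ and $M$, which is precisely where the work would lie.

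The idea you are missing is that case (4) admits a uniform, non-computational treatment via strongly embedded subgroups. If $S \le G_\alpha =: H$, then $|\Omega|$ is odd, so any nontrivial $2$-element fixes an odd number of points; combined with fixity $2$ this means every involution $s \in S$ fixes exactly the one point $\alpha$, whence $C_G(s) \le H$ by Lemma \ref{normaliser}(ii). Moreover, for $g \in G \setminus H$, a $2$-element $x \in H \cap H^g$ would fix the two distinct points $\alpha$ and $\alpha^{g^{-1}}$, again impossible on an odd-size set with fixity $2$, so $H \cap H^g$ has odd order and $H$ is strongly embedded in $G$. Since $G$ is non-abelian simple, Burnside's transfer theorem and the Brauer--Suzuki theorem exclude cyclic and quaternion Sylow $2$-subgroups, so Bender's classification applies and yields $G \cong \PSL_2(2^n)$, $\Sz(q)$ or $\PSU_3(2^n)$ --- none of which is sporadic. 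This single application of Bender's theorem replaces your entire group-by-group analysis and closes the gap.
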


\begin{proof}
Assume otherwise. 
Then we apply Lemma \ref{anika2}, where, for a suitable choice of $\alpha \in \Omega$ and $S \in \sy_2(G)$, 
 one of the cases (2) or (4) must hold. 
For (2) we notice that sporadic simple groups with dihedral or semi-dihedral Sylow $2$-subgroups are known (see \cite{GW} and \cite{ABG}), and the only possibility, the Mathieu group $\M_{11}$, has already been excluded by Lemma \ref{A7}.
Thus we know that $G_\alpha$ contains a Sylow $2$-subgroup -- without loss $S \le G_\alpha$. 
Then we prove that $G$ has a strongly embedded subgroup, and we argue directly here, briefly, rather than quoting several results from \cite{MW2}. We notice that $|\Omega|$ is odd and we let $s \in S$ be an involution. Then $\alpha$ is the unique fixed point of $s$ on $\Omega$, and Lemma \ref{normaliser}(ii) forces $C_G(s) \le G_\alpha=:H$.
If $g \in G\setminus H$ and if $x\in H \cap H^g$ is a $2$-element, then $x$ fixes two distinct points on a set of odd size, which together with our global fixity 2 hypothesis forces $x=1$. Now $H$ is strongly embedded in $G$, and we apply Bender's main result in \cite{Ben}:
Since $G$ is non-abelian simple, Burnside's $p$-complement Theorem and the Brauer-Suzuki Theorem yield that
$S$ is neither cyclic nor quaternion. Then \cite{Ben} gives a list of possibilities for $G$, and
none of these groups is a sporadic simple group.
\end{proof}

Now we can give a new proof for the main result in \cite{MW2} about simple groups that act with fixity 2, and
thus confirm that it is correct in spite of the flawed Lemma 2.12. 
The reason why we can often still argue as in \cite{MW2} is that, in many cases, Lemma 2.12(2) was only used for the structure of Sylow $2$-subgroups, not for further details. Since the structure was correct (dihedral or semi-dihedral), these applications only needed careful double-checking and always turned out to be correct.
 
Details about the action of the groups satisfying Hypothesis\ref{hyp2fix} can be found in Lemmas 
\ref{A7}, \ref{PSL2} and \ref{Suzuki}.

\begin{thm}\label{fix2simple}
Suppose that $G$ is a finite simple non-abelian group and that $(G,\Omega)$ satisfies Hypothesis \ref{hyp2fix}.
Then $G$ is isomorphic to $\PSL_3(4)$ or there exists a prime
power $q$ such that $G$ is isomorphic to $\PSL_2(q)$ or to $\Sz(q)$.
\end{thm}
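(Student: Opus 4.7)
The plan is to invoke the Classification of Finite Simple Groups and to treat alternating, sporadic, and Lie-type groups separately, keeping in mind the small isomorphisms $\A_5 \cong \PSL_2(5) \cong \PSL_2(4)$, $\A_6 \cong \PSL_2(9)$, and $\A_8 \cong \PSL_4(2)$, so that low-rank alternating cases merge into families already under analysis. For $\A_n$ with $n \ge 7$, I would follow Section 3 of \cite{MW2}: Lemma \ref{A7} disposes of $\A_7$, and for $n \ge 8$ an order comparison (which uses only Lemma \ref{normaliser} and the subgroup structure of $\A_n$, not the flawed Lemma 2.12 of \cite{MW2}) produces a contradiction. For sporadic $G$, Theorem \ref{spor} above already forces odd point stabilisers, after which Lemmas 4.3 through Corollary 4.8 of \cite{MW2} eliminate every sporadic candidate.

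For $G$ of Lie type, I split on the parity of $|G_\alpha|$. If $|G_\alpha|$ is odd, Lemma \ref{normaliser}(v) forces it to be a Hall $2'$-subgroup, and the corresponding analysis from \cite{MW2} transfers unchanged because those arguments rest only on parts (i)--(ii) of Lemma \ref{normaliser} and the known subgroup structure in \cite{GLS3}. If $|G_\alpha|$ is even, Lemma \ref{anika2} leaves three structural possibilities for a Sylow 2-subgroup $S$ of $G$: $S$ is dihedral or semi-dihedral (case (2)); $G$ has a proper Frobenius normal subgroup of 2-power index (case (3)), which is incompatible with simple non-abelian $G$; or $S \le G_\alpha$ (case (4)), in which case the strongly-embedded-subgroup argument from the proof of Theorem \ref{spor}, combined with the Burnside and Brauer--Suzuki theorems to exclude cyclic and quaternion Sylow 2-subgroups, followed by Bender's theorem \cite{Ben}, restricts $G$ to $\PSL_2(2^n)$, $\Sz(2^n)$, or $\PSU_3(2^n)$.

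The dihedral/semi-dihedral case (2) is narrowed by the Gorenstein--Walter \cite{GW} and Alperin--Brauer--Gorenstein \cite{ABG} theorems to $\PSL_2(q)$ with $q$ odd, $\PSL_3(q)$ or $\PSU_3(q)$ for appropriate odd $q$, together with $\A_7$ and $\M_{11}$ already excluded. The final case-by-case identification is then supplied by Lemma \ref{PSL2} for $\PSL_2(q)$, Lemma \ref{Suzuki} for $\Sz(q)$, Lemma 3.15 of \cite{MW2} for $\PSU_3(q)$, Lemma 3.16 of \cite{MW2} for $\PSL_3(q)$ with $q$ odd, and Lemma \ref{A7} for the exceptional $\PSL_3(4)$. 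Each of these quoted results depends only on Lemma \ref{normaliser} together with subgroup structure, not on the flawed Lemma 2.12 of \cite{MW2}.

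The main obstacle is the careful bookkeeping required: I need to verify, case by case, that wherever \cite{MW2} previously invoked its Lemma 2.12(2), the only structural fact actually used was that $S$ is dihedral or semi-dihedral, a conclusion preserved in Lemma \ref{anika2}(2). Once this is confirmed, the remaining chains of implications transport without change, the flaw in \cite{MW2} does not propagate, and the classification yields exactly the list $\PSL_2(q)$, $\Sz(q)$, and $\PSL_3(4)$.
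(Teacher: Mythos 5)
Your proposal is correct and follows essentially the same route as the paper: both rest on the CFSG together with the case division of Lemma~\ref{anika2}, using Gorenstein--Walter and Alperin--Brauer--Gorenstein for the dihedral/semi-dihedral case, the strongly embedded subgroup argument plus Bender's theorem when $S \le G_\alpha$, simplicity to kill case (3), and the identification lemmas (\ref{A7}, \ref{PSL2}, \ref{Suzuki} and Lemmas 3.15/3.16 of the earlier paper) to arrive at the final list. The only difference is cosmetic organisation (you split first by family of simple group, the paper splits first by the parity of the point stabiliser order and the Sylow 2-structure), and your closing caveat about verifying that only the dihedral/semi-dihedral conclusion of the old Lemma 2.12(2) was ever used is exactly the check the authors report having carried out.
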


\begin{proof}
We argue along the cases of Lemma \ref{anika2}.
If the point stabilisers have odd order, then we can follow the arguments in \cite{MW2}, because this case is not affected by the flaw in Lemma 2.12 there. Hence we
use the CFSG and go through all series of groups (see Theorem 3.20 in \cite{MW2})
and all sporadic groups (see Theorem 4.7 in \cite{MW2}). As a result we find exactly those examples that are listed in the theorem. 

If $G$ has dihedral or semi-dihedral Sylow $2$-subgroups, as in Case (2), then we know
the possibilities for $G$ by the main results in \cite{GW} and \cite{ABG}. Combining this with Lemmas 3.15 and 3.16 from \cite{MW2} and  Lemmas
\ref{A7}, \ref{PSL2} and \ref{Suzuki} here this leads to the series $\PSL_2(q)$ and $\Sz(q)$.
Case (3) cannot occur because $G$ is simple, and in Case (4) we refer to the analysis in \cite{MW2}, which is similar to the arguments at the end of the proof of Theorem \ref{spor} above:
Since $|\Omega|$ is odd, Lemma 2.18 of \cite{MW2} applies and gives that $G$ is isomorphic to $\PSL_2(q)$, $\Sz(q)$ or $\PSU_3(q)$, where $q$ denotes a prime power. Finally,  Lemma 3.15 of \cite{MW2} shows that the third type of group does not occur under hypothesis \ref{hyp2fix}.
\end{proof}

\vspace{1cm}


\section{Technical results and the proof of Theorem 1.2}

In this section we revisit some results from \cite{MW2} that have not been necessary for the classification of finite simple examples, but that play a role for more general structure results.
Sometimes we revisit them and add more details, or we make small corrections or double-check the proofs because they depend on the $2$-structure, or we generalise results.

First we discuss  Lemma 2.16 from \cite{MW2}. Fortunately, the mistake we made there does not have any consequences in the remainder of the article \cite{MW2}, but we still feel that we should specify the problem and correct the mistake. Towards the end of the proof, when $(O_p(G)G_\alpha, \alpha^{O_p(G)})$ satisfies Hypotheses 1.1 of \cite{MW2} and $p=2$, then in the case where $Z(O_2(G)G_\alpha)=1$, it is not necessarily true that $G_\alpha$ acts fixed point freely on $O_2(G)$. An example for this situation is the group $G$ with id [96,70] in the GAP package SmallGrp~\cite{SmallGrp}. This group $G$ has trivial centre, $O_2(G)\cong C_2^4\rtimes C_2$ and $G_\alpha\cong C_3$.\label{sec:ex:fix2mistake}
It has a subgroup of index 2 of structure ($C_2^4\rtimes C_3$) that is a Frobenius group.

Here we have a corrected version:

\begin{lemma}\label{2.16}

Suppose that Hypothesis \ref{hyp2fix} holds and that $\alpha\in\Omega$. Suppose that $p\in\pi(G)$ is such that $O_p(G)\neq 1$ and that $G_\alpha$ has odd order. Then $G_\alpha$ is metacyclic.
More precisely:

If $p$ is odd, then $O_p(G)G_\alpha$ is a Frobenius group with complement $G_\alpha$. 

If $p=2$, then we set $P:=O_2(G)$ and
$Z:=Z(PG_\alpha)$, and then one of the following holds:
	\begin{itemize}
\item[(1)] $Z \neq 1$ and $G/Z$ is a Frobenius group.

\item[(2)] $Z=1$ and $PG_\alpha$ is a Frobenius group.

\item[(3)] There is some $\beta\in\Omega$ such that $1\neq G_\alpha\cap G_\beta$ is properly contained in $G_\alpha$, and $G_\alpha$ is a Frobenius group with complement $G_\alpha\cap G_\beta$.

\item[(4)] $G_\alpha$ fixes two points in $\Omega$. Then $P=Z(G)$ has order $2$ or $[P,G_\alpha]\cdot G_\alpha$ is a Frobenius group.
	\end{itemize}
\end{lemma}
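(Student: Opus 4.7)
The general strategy is to exploit the Frobenius criterion of Lemma~\ref{charfrob} together with the orbit and centraliser bounds of Lemma~\ref{normaliser}, applied at different levels inside the group. The metacyclic conclusion for $G_\alpha$ will follow once $G_\alpha$ is exhibited as a Frobenius complement of odd order, since such complements are Z-groups (all Sylows cyclic) and hence metacyclic. I would split the proof according to the parity of $p$.

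For $p$ odd, I would reproduce the template from \cite{MW2}. Set $Q := O_p(G) \cap G_\alpha$. If $Q \neq 1$, then $Q \trianglelefteq O_p(G)$ fixes $\alpha$, so Lemma~\ref{normaliser}(i) gives $|O_p(G):Q| = |\alpha^{O_p(G)}| \le 2$. Since $p$ is odd this forces $O_p(G) = Q \le G_\alpha$ and hence, by faithfulness of the action, $O_p(G) = 1$, a contradiction. Thus $Q = 1$. For each $1 \ne x \in G_\alpha$, Lemma~\ref{normaliser}(ii) gives $|C_{O_p(G)}(x)| \le |C_G(x) : C_{G_\alpha}(x)| \le 2$, and oddness of $p$ forces $C_{O_p(G)}(x) = 1$. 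Hence $O_p(G) G_\alpha$ is Frobenius with complement $G_\alpha$.

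For $p = 2$, coprimality immediately yields $P \cap G_\alpha = 1$. The principal dichotomy is whether $PG_\alpha$ itself is a Frobenius group: via Lemma~\ref{charfrob} and Lemma~\ref{normaliser}(i), this reduces to asking whether $N_P(X) = 1$ for every $1 \ne X \le G_\alpha$. If yes, $PG_\alpha$ is Frobenius with complement $G_\alpha$, and I would split on $Z$: the case $Z = 1$ is case~(2); when $Z \ne 1$ one must additionally verify that $Z \trianglelefteq G$ and that the Frobenius structure descends to $G/Z$, giving case~(1). If some $N_P(X) \ne 1$, then there is an involution $t \in P$ swapping $\alpha$ with the unique second fixed point $\beta \in \fixO(X)$. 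Either $t$ already normalises $G_\alpha$, so $G_\alpha = G_\beta$ and we land in case~(4); or $t$ normalises only a proper subgroup, in which case setting $H := G_\alpha \cap G_\beta \lneq G_\alpha$ and applying Lemma~\ref{charfrob} inside $G_\alpha$ --- noting that any $1 \ne Y \le H$ satisfies $\fixO(Y) = \{\alpha, \beta\}$, so $N_{G_\alpha}(Y)$ preserves this pair, fixes $\alpha$ and hence $\beta$, landing in $H$ --- yields case~(3).

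The main technical obstacle will be the refinement of case~(4). The involution $t$ centralises $G_\alpha$, so $C_P(G_\alpha) \ne 1$; each element of $C_P(G_\alpha)$ produces a further $G_\alpha$-fixed point $\alpha^p$ under the commuting action, so fixity~$2$ forces $|C_P(G_\alpha)| \le 2$. If $C_P(G_\alpha) = P$ then $|P| = 2$, normality places $P$ inside $Z(G)$, and Lemma~\ref{normaliser}(iv) forces $P = Z(G)$. Otherwise $[P, G_\alpha] \ne 1$; to see that $[P,G_\alpha] \cdot G_\alpha$ is Frobenius I would combine the centraliser bound $|C_P(x)| \le 2$ from Lemma~\ref{normaliser}(ii) (using $P \cap G_\alpha = 1$) with $C_P(G_\alpha) \le C_P(x)$ to collapse $C_P(x)$ to $C_P(G_\alpha) = \langle t \rangle$ for every $1 \ne x \in G_\alpha$, and then argue via the coprime-action decomposition $P = C_P(G_\alpha) \cdot [P, G_\alpha]$ together with the identity $[[P,G_\alpha], G_\alpha] = [P,G_\alpha]$ that the central involution cannot lie inside $[P, G_\alpha]$, so $C_{[P,G_\alpha]}(x) = 1$. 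This last structural step, together with the verification that the central subgroup $Z$ in case~(1) is normal in $G$, is the key technical hurdle of the proof.
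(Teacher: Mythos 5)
Your architecture for the $p=2$ case (split on whether some $C_P(x)\neq 1$, then on whether the resulting involution normalises $G_\alpha$ or only a proper subgroup) is essentially the paper's, and your treatment of the case-(4) dichotomy via $|C_P(G_\alpha)|\le 2$ is the right idea. However, there is one genuine gap: the unconditional claim ``$G_\alpha$ is metacyclic'' is not delivered by your stated mechanism in case (3). There $G_\alpha$ is exhibited as a Frobenius \emph{group} with complement $H=G_\alpha\cap G_\beta$, not as a Frobenius \emph{complement}, so ``odd-order Frobenius complements are Z-groups'' says nothing about $G_\alpha$ itself. The paper needs a separate argument at exactly this point: the Frobenius kernel $K$ of $G_\alpha$ consists of $1$ together with the elements of $G_\alpha$ whose only fixed point is $\alpha$; for $x\in K^\#$ one gets $C_P(x)=1$ (a nontrivial element of $C_P(x)$ would move $\alpha$ to a second fixed point of $x$), so $PK$ is a Frobenius group with complement $K$; hence $K$ has cyclic Sylow subgroups and, being nilpotent as a Frobenius kernel, is cyclic, whence $G_\alpha=KH$ is metacyclic. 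Nothing in your proposal covers this, and your opening sentence explicitly routes the metacyclic conclusion only through the ``Frobenius complement'' mechanism.

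Three smaller points. First, in the odd-$p$ case the assertion $Q=O_p(G)\cap G_\alpha\trianglelefteq O_p(G)$ is false in general; the correct route is that if $Q\neq 1$ and $Q<O_p(G)$ then $Q<N_{O_p(G)}(Q)$ while $\alpha^{N_G(Q)}\subseteq\FO(Q)$ has size at most $2<p$ --- this is precisely Lemma \ref{normaliser}(v), which the paper invokes instead. Second, your handling of case (1) is internally inconsistent: if $PG_\alpha$ is a Frobenius group then $Z=Z(PG_\alpha)=1$ automatically, so the sub-case ``$Z\neq 1$'' cannot arise inside that branch; the paper dispatches $Z\neq 1$ at the outset, by applying Lemma 2.14 of \cite{MW2} to $(PG_\alpha,\alpha^P)$ (and, when $|\alpha^P|\le 2$, by showing $P=Z(G)$). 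Third, the step you yourself flag as the key hurdle --- that the involution $t=C_P(G_\alpha)$ does not lie in $[P,G_\alpha]$ --- is genuinely needed and your sketch via $[[P,G_\alpha],G_\alpha]=[P,G_\alpha]$ does not close it, since that identity is compatible with $t$ lying in the Frattini subgroup of $[P,G_\alpha]$; the paper argues instead with the direct decomposition of $P/\Phi(P)$ under the coprime action of $G_\alpha$ together with $C_{P/\Phi(P)}(G_\alpha)=\overline{C_P(G_\alpha)}$. As written, your proposal would not compile into a complete proof without these repairs.
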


\begin{proof}
First we observe that $O_p(G)G_\alpha$ acts transitively and
non-regularly on $\alpha^{O_p(G)}$. If $|\alpha^{O_p(G)}| \le 2$,
then $p=2$ and $O_2(G)=Z(G)$. Then Lemma 2.14 of \cite{MW2} implies that
$G/Z(G)$ is a Frobenius group. 
Otherwise $|\alpha^{O_p(G)}| > 2$, and then our fixity hypothesis implies that
$O_p(G)G_\alpha$ acts faithfully on this orbit.
Hence we may now suppose that
$(O_p(G)G_\alpha,\alpha^{O_p(G)})$ satisfies Hypothesis \ref{hyp2fix}.

If $p$ is odd, then Lemma \ref{normaliser}(v) yields that $p \notin \pi(G_\alpha)$, because $O_p(G) \neq 1$. Thus $C_{O_p(G)}(G_\alpha) = 1$ by Lemma \ref{normaliser}(ii), and it follows that $O_p(G)G_\alpha$ is a Frobenius
group with Frobenius complement $G_\alpha$. Since $G_\alpha$ has odd order, this means that $G_\alpha$ is metacyclic (see \cite{Hupp}, V 8.18(b)). 
Now suppose that
$p=2$, with notation as in the statement of our lemma. If $Z \neq 1$, then $PG_\alpha/Z$ is
a Frobenius group with complement isomorphic to $G_\alpha$, again by Lemma 2.14 of \cite{MW2}. 

\textbf{Case 1:} $Z = 1$ and $G_\alpha$ acts fixed point freely on $P$. 

Then $PG_\alpha$ is a Frobenius group with complement
$G_\alpha$. As in the previous paragraph we deduce that $G_\alpha$ is metacyclic.

\textbf{Case 2:} There exists some $g\in G_\alpha$ such that $C_{P}(g)\neq 1$. 

Then $|C_{P}(g)|=2$ by Lemma \ref{normaliser}(ii) and because $G_\alpha$ has odd order. Hence there exists some $\beta\in\Omega$ such that $G_\alpha\cap G_\beta\neq 1$. 

\textbf{Case 2a:} $G_\alpha\cap G_\beta$ is properly contained in $G_\alpha$.

Then Lemma 2.15 of \cite{MW2} gives that $G_\alpha$ is a Frobenius group with Frobenius complement $H:=G_\alpha\cap G_\beta$. It remains to prove that $G_\alpha$ is metacylic in this case. We notice that the kernel $K \le G_\alpha$ of this Frobenius group consists of $1$ together with all elements whose only fixed point is $\alpha$.
Then, since $P \cap G_\alpha=1$, we deduce for all $x \in K^\#$ that $C_P(x) = 1$.
This means that $PK$ is a Frobenius group with complement $K \le G_\alpha$ of odd order.
The fact that $K$ is a Frobenius kernel in the group $G_\alpha$ implies that $K$ is nilpotent, and consequently $K$ is nilpotent of odd order and with cyclic Sylow subgroups. Hence $K$ is cyclic.
Moreover $H$ and $K$ have coprime orders and $H$, being a Frobenius complement of odd order, also has cyclic Sylow subgroups. Then it follows that
$KH = G_\alpha$ is metacyclic.

\textbf{Case 2b:}  $G_\alpha=G_\beta$.

If $G_\alpha$ does not act faithfully on $P$, then Lemma \ref{normaliser}(ii) forces $|P|=2$, and then Lemma 2.14 of \cite{MW2} is applicable again.
First suppose that the action of $G_\alpha$ on $P$ is faithful. Then we set $H:=PG_\alpha$ and $\bar H:=H/\Phi(P)$.
Now $\bar G_\alpha$ also acts faithfully on $\bar P$ (because they have coprime order).
Coprime action and the fact that $\bar P$ is abelian yield that
$\bar P=[\bar P,\bar G_\alpha] \times C_{\bar P}(\bar G_\alpha)$.
But we also know that $P=[P,G_\alpha] \cdot C_{P}(G_\alpha)$ and that $|C_{P}(G_\alpha)| \le 2$ by Lemma \ref{normaliser}(ii). 
Therefore, if $[P,G_\alpha] \neq P$, then $[P,G_\alpha] \cdot G_\alpha$ is a Frobenius group with complement $G_\alpha$, as stated.
We assume for a contradiction that $[P,G_\alpha] = P$. Then $C_{P}(G_\alpha) \le [P,G_\alpha]$ and
the coprime action of $G_\alpha$ on $\Phi(P)$ implies that
$C_{\bar H}(\bar G_\alpha) = \overline{C_H(G_\alpha)}$, and in particular
$$C_{\bar P}(\bar G_\alpha)=\overline{C_{P}(G_\alpha)} \le \overline{([P,G_\alpha])}=[\bar P,\bar G_\alpha]$$ 
by our assumption.
This is false, hence $[P,G_\alpha] \neq P$ and our claim holds. 
In all cases V 8.18(b) in \cite{Hupp} yields that
$G_\alpha$ is metacyclic.
\end{proof}

In our discussion of components in \cite{MW2}, we felt in hindsight that we should have given more details in some of the proofs. 
All results are correct, but we prove a lemma here that is helpful in making some of the arguments much more clear later on. 
Again we prove it in more egenerality for future applications.

\begin{lemma}\label{Ex}
Suppose that Hypothesis \ref{hyp2fix} holds and that
$E$ is a component of $G$. If $x\in N_G(E) \setminus E$ is an involution,
then $|C_E(x)|\ge 3$.
\end{lemma}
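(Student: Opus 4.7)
The plan is to argue by contradiction, assuming $|C_E(x)|\le 2$, and to combine a parity argument with a $2$-local analysis via Lemma~\ref{anika1} and the classification of quasisimple groups with dihedral, semi-dihedral, or generalised quaternion Sylow $2$-subgroups.

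First I would rule out $C_E(x)=1$. Since $x$ has order~$2$, conjugation by $x$ partitions $E$ into the fixed set $C_E(x)$ and orbits of size~$2$, so $|E|\equiv |C_E(x)|\pmod{2}$. If $C_E(x)=1$ then $|E|$ is odd, contradicting the fact that $E$ is a component: $E/Z(E)$ is a non-abelian finite simple group, which has even order by Feit--Thompson, so $|E|$ is even.

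Hence $|C_E(x)|=2$ and I write $C_E(x)=\langle z\rangle$ for an involution $z\in E$. Next I would set up a $2$-local analysis. The Klein-four group $V:=\langle x,z\rangle$ is a $2$-subgroup of $\langle E,x\rangle$, so it lies in some Sylow $2$-subgroup $T$. Since $x\in T$, the subgroup $S:=T\cap E$ has index~$2$ in $T$, lies in $\sy_2(E)$, is $x$-invariant, and contains $z$. A direct calculation gives $|C_T(x)|=2|C_S(x)|\le 2|C_E(x)|=4$. Because $E$ is perfect with non-abelian simple quotient, Burnside's normal $p$-complement theorem rules out $|S|\le 2$ (otherwise $E$ would have a normal $2$-complement containing $[E,E]=E$, forcing $|S|=1$ and contradicting $|E|$ even), so $|T|\ge 8$. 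Lemma~\ref{anika1}, applied with $p=2$ and $x\in T$, then forces $T$ to have maximal class, so $T$ is dihedral, semi-dihedral, or generalised quaternion; and $S$, as a subgroup of index~$2$, is of the same type or cyclic.

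Finally, I would invoke the classical theorems of Gorenstein--Walter, Alperin--Brauer--Gorenstein, and Brauer--Suzuki to restrict $E/Z(E)$ to one of $\PSL_2(q)$ ($q$ odd), $\PSL_3(q)$ or $\PSU_3(q)$ ($q$ odd), $\A_7$, or $\M_{11}$. For each family I would verify that the centralizer in $E$ of any involution in $N_G(E)\setminus E$ contains an element of order at least $3$: for $\PSL_2(q)$ this follows from the fact that involution centralizers are dihedral of order $\ge q\pm 1$ and that outer (field/diagonal) involutions centralize a subgroup of the same kind; the semi-dihedral and quaternion cases are treated analogously from the well-known description of $\mathrm{Aut}$; and the small cases $\A_7$, $\M_{11}$ (together with small $\PSL_3$ and $\PSU_3$) are handled directly in \texttt{GAP}, as in Lemma~\ref{A7}. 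This produces the desired contradiction.

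The main obstacle is the final case analysis, particularly verifying uniformly for $\PSL_3(q)$ and $\PSU_3(q)$ that outer involutions have centralizer of order $\ge 3$, since graph, field and diagonal automorphisms interact in a slightly intricate way. An alternative, classification-free route would analyse the $V$-orbit structure on $\Omega$ together with Lemma~\ref{normaliser}(ii) applied to both $z$ and $xz$, but this appears considerably more delicate and I would fall back on it only if the structural classification above proved unavailable.
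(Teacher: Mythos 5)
Your overall strategy coincides with the paper's: assume $|C_E(x)|\le 2$, show that a Sylow $2$-subgroup of $E\langle x\rangle$ has maximal class, classify the possible simple quotients $E/Z(E)$, and derive a contradiction case by case. The reduction to maximal class is sound (the paper reaches it via 5.3.10 of Kurzweil--Stellmacher applied to $C_T(x)=\langle z,x\rangle\cong C_2\times C_2$ rather than via Lemma~\ref{anika1}, but both routes work). The argument goes off course at your claim that $S=T\cap E$ is ``of the same type or cyclic'': an index-$2$ subgroup of a semi-dihedral $2$-group is cyclic, dihedral or generalised quaternion, \emph{never} semi-dihedral, and the same list holds for index-$2$ subgroups of dihedral and of generalised quaternion groups (moreover $T$ itself cannot be generalised quaternion, since it contains the Klein four group $V$). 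Hence the Sylow $2$-subgroups of $E/Z(E)$ are dihedral -- quaternion being excluded for a simple group by Brauer--Suzuki and cyclic by Burnside -- so only Gorenstein--Walter is needed and only $E/Z(E)\cong \A_7$ or $\PSL_2(q)$ with $q$ odd survive. The cases $\PSL_3(q)$, $\PSU_3(q)$ and $\M_{11}$, which you flag as the main obstacle and leave open, simply do not arise; the paper never needs Alperin--Brauer--Gorenstein here.

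The remaining genuine gap is in the final verification, which is harder than your sketch suggests even for $\PSL_2(q)$. First, one must check that $x$ induces a \emph{nontrivial} automorphism of $E/Z(E)$; the paper does this with the Three Subgroups Lemma (if $[E,x]\le Z(E)$ then $[E,x]=1$ because $E$ is perfect). Second, a lower bound on $|C_{E/Z(E)}(x)|$ does not directly bound $|C_E(x)|$ from below when $Z(E)\neq 1$: all one knows is that the preimage of $C_{E/Z(E)}(x)$ consists of elements $e$ with $[e,x]\in Z(E)$. For $\A_7$, for $q\ge 11$ and for $q\in\{5,9\}$ a crude count ($|C_{E/Z(E)}(x)|$ strictly exceeds what a set of order at most $2|Z(E)|$ can cover) finishes the job, but for $q=7$ with $|Z(E)|=2$ the numbers are too close and the paper needs a separate commutator computation: if $a$ has order $3$ and $a^x=a^{-1}$, then $[a,x]=a\notin Z(E)$, contradicting $[a,x]\in Z(E)$. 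Your appeal to centralizers ``dihedral of order $\ge q\pm 1$'' concerns $\PGL_2(q)$ and does not by itself settle the covering group $E$, so this step must be filled in along the paper's lines.
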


\begin{proof}
Assume for a contradiction that $|C_E(x)| \le 2$.
Let $L:=E\langle x \rangle$ and let $x \in S \in \sy_2(L)$. As $E \unlhd L$, we have that $T:=S\cap E$ is a Sylow $2$-subgroup of $E$, that $T \unlhd S$ and that $Z:=Z(S) \cap T \neq 1$.
Moreover $Z \le C_E(x)$ and therefore $Z=C_E(x)$ and $|Z|=2$.
As $x \notin E$, this implies that $C_S(x)=Z\langle x\rangle$ is elementary abelian of order 4.
Then 5.3.10 in \cite{KS} yields that $S$ is a dihedral or semi-dihedral group, which makes $T$
a cyclic group, a dihedral group or a quaternion group. 
If $T$ is cyclic, then the simple group $E/Z(E)$ has cyclic Sylow $2$-subgroups, which is impossible.
Therefore $T$ is quaternion or dihedral and its image in
$E/Z(E)$ is a dihedral group.
Since $E/Z(E)$ is simple, the Gorenstein-Walter Theorem (main result in \cite{GW}) yields that $E/Z(E)$ is isomorphic to $\A_7$ or to $\PSL_2(q)$ for some odd prime power $q$.

Next we recall that $x$ normalises $E$, but it does not centralise $E$, and $E$ is perfect.
Now $x$ normalises $Z(E)$ and we briefly deduce why $x$ acts non-trivially on $E/Z(E)$: Otherwise $[E,x]\le Z(E)$ which, together with the Three Subgroups Lemma, implies that $[E,x]=1$. This is a contradiction.

It follows that the involution $x$ acts as an automorphism of order 2 on $E/Z(E)$.
Let $\bar E:=E/Z(E)$ and let $e \in E$ be such that $\bar e \in C_{\bar E}(x)$.
Then $[e,x] \in Z(E)$, which means that $e \in C_E(x)Z(E)$.
Our main assumption implies that $|C_E(x)Z(E)| \le 2 \cdot |Z(E)|$.

If $\bar E \cong \A_7$, then $Z(E)$ has order dividing 6 and
$x$ acts like a transposition or a triple transposition in $\Sym_7$.
Therefore $C_{\bar E}(x)$ has order 120 or 24, but $|C_E(x)Z(E)| \le 2 \cdot |Z(E)| \le 12$.
So this is impossible.

If $q$ is an odd prime power such that $\bar E \cong\, \PSL_2(q)$, then $Z(E)$ has order 2, generically,
only for $q=9$ it has order 6, because $\PSL_2(9) \cong \A_6$.
Therefore, if $q \ge 11$, then $|C_E(x)Z(E)| \le 2 \cdot |Z(E)| \le 4$, whereas
$|C_{\bar E}(x)| \ge \frac{q-1}{2}\ge 5$. This is impossible.
For smaller values of $q$ we have to analyse the situation more closely.

Suppose that $q=5$. Then $\bar E \cong \,\PSL_2(5) \cong \A_5$, $|Z(E)| \le 2$ and $x$ has six fixed points on $\bar E$. But $|C_E(x)Z(E)| \le 2 \cdot |Z(E)| \le 4$, so this is impossible.

Suppose that $q=7$. Then $\bar E \cong \,\PSL_2(7)$, $|Z(E)|\le 2$ and $x$ acts as a diagonal automorphism, so it has three fixed points on $\bar E$.
Now $|C_{\bar E}(x)|=3$, so if we let $C \le E$ denote the full pre-image in $E$, then it has order 3 (if $Z(E)=1$) or order 6 (if $Z(E) \neq 1$).
The first case is impossible by our assumption. In the second case we note that $Q:=O_3(C)$ has order 3 and is normalised by $x$, but not centralised (by our assumption on $C_E(x)$).
Therefore $x$ inverts it. Let $a\in Q$ be an element of order 3.
Then $a^x=a^{-1}=a^2$, and at the same time $[a,x] \in Z(E)$. As $[a,x]=a^{-1}a^x=a^2a^{-1}=a$, this forces $[a,x]=1$, which is again a contradiction.

Finally suppose that
$q=9$. Then $\bar E \cong \,\PSL_2(9)\cong \A_6$ and $|Z(E)|\le 6$. But $x$ has 10 or 24 fixed points on $\bar E$,
and both numbers are impossible for $|C_{\bar E}(x)|$.
\end{proof}

\begin{lemma}\label{E(G)}
Suppose that Hypothesis \ref{hyp2fix} holds, let $\alpha \in \Omega$ and suppose that $E(G) \neq 1$. Then $E(G)
\cap G_{\alpha}\neq 1$.
\end{lemma}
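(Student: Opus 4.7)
The plan is to assume $E(G) \cap G_\alpha = 1$ and derive a contradiction. Since $E(G)$ is characteristic in $G$, the assumption gives $E(G)\cap G_\omega=1$ for every $\omega\in\Omega$, so $E(G)$ acts semi-regularly on $\Omega$. The key centraliser bound is then: for every $h\in G_\alpha^\#$ and every $y\in C_{E(G)}(h)$, the point $\alpha^y$ is fixed by $h$, because $(\alpha^y)^h=\alpha^{yh}=\alpha^{hy}=\alpha^y$, and $y\mapsto\alpha^y$ is injective by semi-regularity. Hypothesis~\ref{hyp2fix} therefore forces $|C_{E(G)}(h)|\le|\fixO(h)|\le 2$.

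If $|G_\alpha|$ is even, I pick an involution $x\in G_\alpha$. Either $x$ normalises some component $E$, in which case $x\in N_G(E)\setminus E$ (as $E\cap G_\alpha=1$) and Lemma~\ref{Ex} gives $|C_E(x)|\ge 3$, directly contradicting the bound; or $x$ permutes the components in orbits of length~$2$, and for a pair $\{E,E^x\}$ the map $f:E\to EE^x$, $e\mapsto ee^x$, is a homomorphism (using that distinct components commute element-wise), lands in $C_{EE^x}(x)$, and has kernel $\{e:e^x=e^{-1}\}$ lying in $E\cap E^x\le Z(E)$. Hence $|C_{E(G)}(x)|\ge|E/Z(E)|\ge 60$, again a contradiction.

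If $|G_\alpha|$ is odd, I pick $h\in G_\alpha$ of odd prime order~$p$. The analogous diagonal construction $e\mapsto ee^h\cdots e^{h^{p-1}}$ applied to a potential $h$-orbit of length $p$ on the components produces an element of $C_{E(G)}(h)$ of size at least $|E_0/Z(E_0)|\ge 60$, so $h$ must normalise every component $E$. For such $E$, if $h$ centralised $E$ then $|C_E(h)|=|E|>2$, so the induced automorphism is non-trivial; perfectness of $E$ together with the three-subgroups lemma shows that the induced automorphism on $\bar E:=E/Z(E)$ has order exactly $p$. The commutator map $\phi:\{m\in E:[m,h]\in Z(E)\}\to Z(E)$, $m\mapsto[m,h]$, is a homomorphism with kernel $C_E(h)$, yielding $|C_E(h)|\ge|C_{\bar E}(\bar h)|$.

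The main obstacle is the final step of ruling out $|C_{\bar E}(\bar h)|\le 2$ for a non-trivial automorphism $\bar h$ of odd prime order of the non-abelian simple group $\bar E$. Thompson's theorem excludes $|C_{\bar E}(\bar h)|=1$ (since $\bar E$ is not nilpotent), so only $|C_{\bar E}(\bar h)|=2$ remains. I would eliminate this via the CFSG by case-analysing $\bar E$: for alternating, Lie-type, and sporadic groups the centraliser of any non-trivial odd-prime-order automorphism is readily checked to have order at least~$3$. Combining $|C_E(h)|\ge|C_{\bar E}(\bar h)|\ge 3$ with the key bound $|C_{E(G)}(h)|\le 2$ gives the desired contradiction.
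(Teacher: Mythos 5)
Your proposal is correct in outline and, for the most part, runs parallel to the paper's argument: the diagonal subgroup $\{e\,e^x\cdots e^{x^{p-1}}\}$ for a component not normalised by $x\in G_\alpha$ of prime order, the observation that $E(G)\cap G_\alpha=1$ forces $|C_{E(G)}(h)|\le 2$ for $h\in G_\alpha^\#$ (the paper gets this from Lemma~\ref{normaliser}(ii); your semi-regularity argument is the same computation), and the appeal to Lemma~\ref{Ex} when an involution normalises a component. The genuine divergence is in the odd-order case. Once you know that $h$ of odd prime order normalises $E$ and $|C_E(h)|\le 2$, the paper simply quotes the main theorem of Fukushima (\cite{Fu2}: a finite group admitting an automorphism of prime order fixing an abelian $2$-group is soluble), which immediately contradicts quasi-simplicity of $E$. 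You instead pass to $\bar E=E/Z(E)$ (your reductions -- the three-subgroups argument for non-triviality of the induced automorphism and the commutator-map bound $|C_E(h)|\ge|C_{\bar E}(\bar h)|$ -- are both correct) and then propose a CFSG case analysis showing that no non-abelian simple group admits a non-trivial automorphism of odd prime order with centraliser of order at most $2$.

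That last step is the one place where your proof is asserted rather than done, and it is not quite as "readily checked" as you suggest; it is in effect a special case of the very theorem the paper cites. It is, however, salvageable with one observation that you should make explicit: if $\bar h$ induces an \emph{inner} automorphism of $\bar E$, say conjugation by $\bar g$ of order $p$, then $C_{\bar E}(\bar h)\supseteq\langle\bar g\rangle$ already has order $p\ge 3$, so only \emph{outer} automorphisms of odd order need checking. These do not exist for alternating or sporadic groups ($|\mathrm{Out}|$ divides $4$ there), so you are reduced to diagonal, field and graph automorphisms of odd order of groups of Lie type, whose centralisers are known and large. With that addendum your route closes; without it, the claim "readily checked" is carrying real weight. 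Either way, citing Fukushima as the paper does is the more economical choice.
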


\begin{proof}
Assume that $E(G) \cap G_{\alpha}=1$ and let $x \in G_{\alpha}$ be
an element of prime order $p$. Let $E$ be a component of $G$
and assume that
$x$ does not normalize $E$.
The conjugates $E,E^x,...,E^{x^{p-1}}$ are distinct components of $G$ and therefore they commute. Consequently 
$L := \{ e \cdot e^x \cdots e^{x^{p-1}} \mid e \in E \}$ is a subgroup of $C_G(x)$
and $L$ is isomorphic to some section $E/Z$, where $Z \le Z(E)$.
Now Lemma \ref{normaliser}(ii) yields that $|L:L_\alpha|=|C_L(x):C_{L_\alpha}(x)| \le 2$, but
$L_\alpha \le E(G) \cap G_\alpha=1$, and therefore
$|L|=2$.
Since $L \cong E/Z$ and $E$ is a component, this is impossible.

It follows that $x$ normalises $E$ and still, by Lemma \ref{normaliser}(ii), we have that
$|C_E(x):C_{E_\alpha}(x)| \le 2$. As $E_\alpha=1$ by assumption, this implies that
$|C_E(x)| \le 2$.
If $x$ has odd order, then the main theorem in \cite{Fu2}
forces $E$ to be soluble, which is again a contradiction.
We are left with the case that $x$ is an involution, and then Lemma \ref{Ex} gives a contradiction.
\end{proof}

\begin{lemma}\label{onecomp}
Suppose that Hypothesis
\ref{hyp2fix} holds. If $\alpha\in \Omega$, then one of the following holds:

(1) All Sylow subgroups of $G_\alpha$ have rank 1 or

(2) $F^*(G)=E(G) O_2(G)$.

Moreover $G$ has at most one component, and if
$E(G) \neq 1$, then $O(G)=1$.
\end{lemma}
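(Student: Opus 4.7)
The plan is to split the argument into the dichotomy (1)/(2) and the two \emph{moreover} claims.

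For the dichotomy, I note that $F^*(G)=F(G)E(G)$ and $F(G)=\prod_p O_p(G)$, so (2) is equivalent to $O_p(G)=1$ for every odd prime $p$. Hence I only need to handle the case when some odd prime $p$ satisfies $O_p(G)\neq 1$ and deduce (1). By Lemma~\ref{normaliser}(v), $G_\alpha$ is either a $p'$-group or contains a Sylow $p$-subgroup of $G$; the latter would put $O_p(G)$ inside $G_\alpha$, and normality together with transitivity and faithfulness would force $O_p(G)=1$. So $G_\alpha$ is a $p'$-group. Next, for $g\in G_\alpha^\#$ any $y\in C_{O_p(G)}(g)$ acts on the $g$-fixed points in $\Omega$: if it fixes both, then $y\in G_\alpha\cap O_p(G)=1$; if it swaps them, then $y$ has order $2$, impossible for a non-trivial $p$-element with $p$ odd. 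Hence $G_\alpha$ acts fixed-point-freely on $O_p(G)$, making $O_p(G)G_\alpha$ a Frobenius group with complement $G_\alpha$; its Sylow subgroups are thus cyclic or generalised quaternion (V~8.18 of \cite{Hupp}), all of rank $1$, so (1) holds.

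For the claim ``$E(G)\neq 1$ implies $O(G)=1$'', I argue by contradiction. If some $O_p(G)\neq 1$ for $p$ odd, the previous paragraph shows that $G_\alpha$ acts fixed-point-freely on $O_p(G)$. However, Lemma~\ref{E(G)} yields some $1\neq x\in E(G)\cap G_\alpha$, and the standard relation $[E(G),F(G)]=1$ forces $x$ to centralise $O_p(G)$, so $C_{O_p(G)}(x)=O_p(G)\neq 1$, contradicting fixed-point-freeness.

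For the bound on components, I suppose there are distinct components $E_1,\ldots,E_r$ with $r\ge 2$ and aim to derive $E(G)\le G_\alpha$, which forces $E(G)=1$ by faithfulness and transitivity. Pick $1\neq x\in E(G)\cap G_\alpha$ of prime order $p$ via Lemma~\ref{E(G)}; since each $E_i$ is normal in $E(G)$ and $x\in E(G)$, conjugation by $x$ normalises every component. Writing $x=x_1\cdots x_r$ via the central product (with $x_i\in E_i$), I distinguish two cases. If some $x_i\in Z(E_i)$, then $x$ centralises $E_i$, and Lemma~\ref{normaliser}(ii) combined with the perfectness of $E_i$ gives $E_i\le G_\alpha$. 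Otherwise every $\pi_i(x)\neq 1$, so $x\notin E_i$ for any $i$; I bound $|C_{E_i}(x)|\ge 3$ either from $|\langle x_i\rangle|\ge p\ge 3$ when $p$ is odd, or via Lemma~\ref{Ex} applied to the involution $x\in N_G(E_i)\setminus E_i$ when $p=2$, and Lemma~\ref{normaliser}(ii) then yields $E_i\cap G_\alpha\neq 1$. Taking any $1\neq w\in E_i\cap G_\alpha$, commutativity of distinct components combined again with Lemma~\ref{normaliser}(ii) and perfectness gives $E_j\le G_\alpha$ for every $j\neq i$; swapping roles yields $E_i\le G_\alpha$ too, completing the contradiction. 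The decisive obstacle is the bound $|C_{E_i}(x)|\ge 3$ in the sub-case $p=2$, which is exactly what Lemma~\ref{Ex} provides.
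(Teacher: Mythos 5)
Your argument reaches the same conclusions but by a genuinely different route in each part. For the dichotomy you prove the implication in the other direction: instead of taking a rank-$2$ elementary abelian $V\le G_\alpha$ and using coprime generation $O_r(G)=\langle C_{O_r(G)}(v)\mid v\in V^\#\rangle\le G_\alpha$ to kill every odd $O_r(G)$ (which is what the paper does), you show directly that a non-trivial $O_p(G)$ for odd $p$ forces $G_\alpha$ to be a Frobenius complement in $O_p(G)G_\alpha$, whence all its Sylow subgroups have rank $1$. This is correct and in fact yields slightly more structural information; note only that your claimed ``equivalence'' of (2) with the vanishing of all odd $O_p(G)$ is not quite accurate (a component may contain normal central subgroups of odd order), but you use only the true direction, so nothing breaks. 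For the component count the paper simply cites Lemma 2.22 of \cite{MW2}; your self-contained argument via the central product decomposition, Lemma \ref{E(G)}, Lemma \ref{normaliser}(ii) combined with perfectness, and Lemma \ref{Ex} for the involution case is sound, and it uses Lemma \ref{Ex} in exactly the way the paper intended when introducing it. For the final claim the paper argues directly with $O(G)$: taking $1\neq x\in E_\alpha$ for the unique component $E$, one has $O(G)\le C_G(E)\le C_G(x)$, and Lemma \ref{normaliser}(ii) together with the odd order of $O(G)$ gives $O(G)\le G_\alpha$, hence $O(G)=1$.

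In that last part your written argument has a small but genuine gap: you derive a contradiction only from the assumption ``$O_p(G)\neq 1$ for some odd $p$'', so what you actually prove is that $F(G)$ is a $2$-group, not that $O(G)=1$. Since $O(G)$ need not be nilpotent, it is not a priori contained in $F(G)$, and ``all odd $O_p(G)$ are trivial'' does not formally imply $O(G)=1$. The repair is standard but must be stated: if $O(G)\neq 1$, then a minimal normal subgroup of $G$ contained in $O(G)$ is a direct product of simple groups of odd order, hence by Feit--Thompson elementary abelian for some odd prime $p$, so $O_p(G)\neq 1$ and your contradiction applies. Alternatively, the paper's one-line argument via $C_G(x)$ avoids the issue entirely.
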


\begin{proof}
Suppose that (1) does not hold, that $p \in \pi(G_\alpha)$ and that
$P \in \sy_p(G_\alpha)$ has rank at least 2. Let $V \le P$ be elementary abelian of order $p^2$ and suppose that $r \in \pi(G)$ is such that $r$ is odd.
If we set $R:=O_r(G)$ and if $v \in V^\#$, then Lemma \ref{normaliser}(ii) yields that
$|C_R(v):C_{R_\alpha}(v)|\le 2$. Since $r>2$, this means that $C_R(v) \le G_\alpha$.
Coprime action yields that
$$R=\langle C_R(v) \mid v \in V^\#\rangle \le G_\alpha,$$
and then it follows that $R=1$.

This means that $F(G)=O_2(G)$, which is (2).
The next statement is Lemma 2.22 in \cite{MW2}.
Here we note that the proof of the corresponding results are based on results that we have partly revisited here, but they do not need to be changed and therefore we do not discuss them again here.

Finally, if $E(G) \neq 1$, then we let $E$ denote the unique component and we let $1 \neq x \in E_\alpha$, by Lemma \ref{E(G)}.
Then $O(G)$ has odd order and lies in $C_G(x)$, so by Lemma \ref{normaliser}(ii) it follows
that $O(G) \le G_\alpha$. This forces $O(G)=1$ because a non-trivial normal subgroup of $G$ cannot be contained in a point stabiliser.
\end{proof}

\vspace{0.5cm}

We conclude this article with the proof of Theorems \ref{main2}. 

\vspace{0.5cm}
\underline{\textbf{Proof of Theorem \ref{main2}:}}

We know by Lemma \ref{normaliser}(iii) that $G$ has even order.
Now we begin by following the arguments in \cite{MW2}, bearing in mind that we have fixity exactly 2 (rather than at most 2) and use our Lemma \ref{anika2} instead of Lemma 2.12 of \cite{MW2} where necessary. 
Assume that $G$ is a minimal counterexample to Theorem \ref{main2}, by which we mean that
$|G| + |\Omega|$ is minimal such that Hypothesis \ref{hyp2fix} holds, but that none of the cases from the theorem hold. Let $\alpha \in \Omega$.

If $G_\alpha$ is metacyclic of odd order and $H \le G_\alpha$ is a non-trivial two point stabiliser, then we argue as in Step 1 in the proof given in \cite{MW2} and we see that $H$ does not have a normal complement in $G$ (otherwise Case (3) of the theorem holds). Step 2 gives that $N_G(H)$ also does not have a normal complement, again because it would lead to Case (3) of the theorem. Since $G$ is not a Frobenius group, Theorem 5.6 in \cite{MW2} then forces Case (4) of Theorem \ref{main2} to hold. This is a contradiction.
Hence $G_\alpha$ is not metacyclic of odd order. In particular, it is not possible that $G_\alpha$ is contained in a Frobenius complement of odd order in some subgroup of $G$. 

Next we turn to Lemma
\ref{anika2}, replacing Lemma 2.12 of \cite{MW2}.
If $G_\alpha$ has odd order, then Lemma \ref{2.16} above is applicable.
If $F(G) \neq 1$, then $G_\alpha$ is metacyclic (contrary to the previous paragraph) or Case (2) of the theorem holds, which is impossible because $G$ is a counterexample.
Now $F(G)=1$ and Lemma \ref{onecomp} yields that $F^*(G)$ is quasi-simple. Together with Lemma \ref{E(G)}
we see that the unique component $E$ of $G$ does not act semi-regularly, and then the minimal choice of $G$ applies. 
Of course $E$ is not a Frobenius group, and hence it acts with fixity 2, and
Theorem 5.1 in \cite{MW2} tells us that $E$ must be simple. Now $G$ is almost simple, but 
not simple, because otherwise Case (4) of the theorem is true with our results from Section 3. 

If $G_\alpha$ has even order and $G$ has dihedral or semi-dihedral Sylow $2$-subgroups, then Case (5)
of our theorem holds, which is again a contradiction.

If $G_\alpha$ has even order and Case (3) of Lemma \ref{anika2} is true, then we immediately have Case (1)
of Theorem \ref{main2}, which is again impossible.

Finally, if $G_\alpha$ contains a Sylow 2-subgroup of $G$, then we refer to Lemma 2.18 from \cite{MW2}.
Since $G$ is not a Frobenius group, this Lemma leads directly to Case (5) of Theorem \ref{main2}.

We are left with the case that $G_\alpha$ has odd order and $F^*(G)$ is simple. 
Then $F^*(G) \neq G$ because $G$ is a counterexample, and Theorem 1.3 in \cite{MW2} gives another contradiction.


\vspace{1cm}

\section*{Appendix}

The following \texttt{GAP} code uses the table of marks and determines, for a positive integer \(k\) and the table of marks \verb|t| of a group \(G\), all faithful and transitive fixity-\(k\) actions of \(G\). The command \verb|TestTom(t,k)| returns a list that contains an entry for each fixity-\(k\) action of \(G\), represented by a list that has as first entry a description of the point stabiliser structure and as second entry a list of fixed point numbers. If there are no fixity-\(k\) actions, then an empty list is returned.
For many finite simple groups, the table of marks is already pre-computed and accessible through the package TomLib~\cite{TomLib}.

\begin{verbatim}
TestTom:=function(t,k)
  local marks,g,fin;
  marks:=MarksTom(t);;
  fin:=[];;
  for g in [1..Length(marks)] do
    if ForAll([2..Length(marks[g])],i->marks[g][i]<k+1)
        and (k in marks[g])
        and marks[g][1]>k
      then Add(fin,[StructureDescription(RepresentativeTom(t,g)),
        marks[g]]);
    fi;
  od;
  return fin;
end;
\end{verbatim}

Throughout the paper, this algorithm has been used for a number groups. For instance, it has been used to establish that none of the groups \(\Alt_7\) and \(\M_{11}\)
can act with fixity~\(2\). This can been seen by the fact that\\\
\verb|List(["A7","M11"],x->TestTom(TableOfMarks(x),2));|\\
returns a list of empty lists. Before these calculations, the package TomLib and the functionality \verb|TestTom| have to be loaded.

If a fixity-\(k\) action exists, then a \texttt{GAP} session can look as follows:
\begin{verbatim}
gap> TestTom(TableOfMarks("L2(5)"),2);
[ [ "C2", [ 30, 2 ] ], [ "C3", [ 20, 2 ] ], [ "C5", [ 12, 2 ] ],
[ "S3", [ 10, 2, 1, 1 ] ], [ "D10", [ 6, 2, 1, 1 ] ],
[ "A4", [ 5, 1, 2, 1, 1 ] ] ]
gap> TestTom(TableOfMarks("L2(7)"),2);
[ [ "C3", [ 56, 2 ] ], [ "C4", [ 42, 2, 2 ] ],
[ "A4", [ 14, 2, 2, 2, 2 ] ], [ "A4", [ 14, 2, 2, 2, 2 ] ],
[ "C7 : C3", [ 8, 2, 1, 1 ] ] ]
\end{verbatim}
This completes the proof of Lemma~\ref{PSL2}.

\bigskip
In Section~\ref{sec:ex:fix2mistake} we study a group with id [96,70] in the Small Groups Library~\cite{SmallGrp}. Some of its properties can be verified as follows:
\begin{verbatim}
gap> g:=SmallGroup([96,70]);;
gap> Center(g);
Group([  ])
gap>  List(LowIndexSubgroups(FittingSubgroup(g),2), IsElementaryAbelian);
[ false, false, false, false, true, false, false, false ]
gap> TestTom(TableOfMarks(g),2);
[ [ "C3", [ 32, 2 ] ] ]
\end{verbatim}



\begin{thebibliography}{0.2cm}


\bibitem{ABG}  Alperin, J. L., Brauer, R. and Gorenstein, D.:
 Finite groups with quasi-dihedral and wreathed Sylow 2-subgroups.
\textit{Trans. Amer. Math. Soc.} \textbf{151} (1970) 1--261.


\bibitem{Ben} Bender, H.:
Transitive Gruppen, in denen jede Involution genau einen Punkt festl\"a{\ss}t,
\textit{J. Algebra} \textbf{17} (1971), 527--554.



\bibitem{SmallGrp} Besche,  H.  U.,  Eick,  B., O'Brien, E. and Horn, M. (Apr. 2022).
\emph{SmallGrp, The GAP Small Groups Library, Version 1.5}. \texttt{https://gap-packages.github.io/smallgrp/}. GAP package.



\bibitem{Fu2} Fukushima, H.: Finite groups admitting an automorphism of prime order fixing an abelian $2$-group. \textit{J. Algebra} {\bf 89} (1984), no. 1, 1--23.




\bibitem{GW} Gorenstein, D. and Walter, J.H.: The Characterization of Finite Groups with
Dihedral Sylow 2-Subgroups. \textit{J. Algebra} \textbf{2} (1965),
85--151.



\bibitem{GLS3} Gorenstein, D., Lyons, R. and Solomon, R.: \textit{The
Classification of the Finite Simple Groups, Number 3}. Mathematical
Surveys and Monographs 40.3 (American Mathematical Society,
Providence, RI), 1998.


\bibitem{TransGrp} Hulpke,    A. (Jul. 2022)  \emph{TransGrp},   Transitive   Groups   Library,   Version   3.6.3. \texttt{https://www.math.colostate.edu/~hulpke/transgrp}. GAP   package.

\bibitem{Hupp} Huppert, B.: \emph{Endliche Gruppen I.} Die Grundlehren der mathematischen Wissenschaften in Einzeldarstellungen, Band 134. Springer, 1967.


\bibitem{KS} Kurzweil, H. and Stellmacher B.: \textit{The Theory of Finite Groups}. Springer, 2004.


\bibitem{MW2}
	Magaard, K. and Waldecker, R.: Transitive permutation
	groups where nontrivial elements have at most two fixed points, 
	\emph{J. Pure Appl. Algebra} 219.4, 729--759. 
	

\bibitem{TomLib}
	Merkwitz, T., L. Naughton, and G. Pfeiffer (Oct. 2019).
	\emph{{TomLib}, The GAP Library of Tables of Marks, {V}ersion
		1.2.9}. \texttt{https://gap-packages.github.io/}\texttt{tomlib}. GAP
	package.



\bibitem{Ro1980}
Ronse, Chr.: On Permutation Groups of Prime Power Order.
\textit{Math. Z.} \textbf{173}, 211--215 (1980).


\bibitem{Ronse}
	Ronse, Chr.: On finite permutation groups in which
	involutions fix at most {$15$} points. \textit{Arch. Math. (Basel)}
	39.2, 109–112. 

\bibitem{GAP} The GAP Group, GAP -- Groups, Algorithms, and Programming, Version 4.13.1; 2024.







\end{thebibliography}
\end{document}